\documentclass[A4,11pt,reqno]{amsart}
\usepackage{indentfirst,enumerate}
\usepackage{amssymb,amsfonts,amsmath,amsthm,mathrsfs}
\usepackage{color}
\usepackage{hyperref}
\usepackage{multicol}

\newtheorem{theorem}{Theorem}[section]

\newtheorem{proposition}[theorem]{Proposition}
\newtheorem{lemma}[theorem]{Lemma}
\newtheorem{remark}[theorem]{Remark}
\newtheorem{example}[theorem]{Example}
\newtheorem{definition}[theorem]{Definition}

\numberwithin{equation}{section}

\newcommand{\T}{\mathbb{T}}
\newcommand{\R}{\mathbb{R}}
\newcommand{\Z}{\mathbb{Z}}
\newcommand{\N}{\mathbb{N}}
\newcommand{\C}{\mathbb{C}}

\newcommand{\Rea}{{\rm Re\, }}
\newcommand{\Ima}{{\rm Im\, }}
\newcommand{\spec}{\textit{spec}\,}
\newcommand{\D}{\mathcal{D}'}
\renewcommand{\H}{\mathcal{H}}
\renewcommand{\to}{\rightarrow}

\begin{document}
	
	\title[Perturbations of (GH) operators on closed manifolds]{Perturbations of globally hypoelliptic \\ operators  on closed manifolds}
	
	\author[Fernando de \'{A}vila]{Fernando de \'{A}vila Silva}
	\address{
		Fernando de \'{A}vila Silva:
		Departamento de Matem\'{a}tica
		Universidade Federal do Paran\'{a},
		Caixa Postal 19081, Curitiba, PR 81531-990,
		Brazil
	}
	
	\email{fernando.avila@ufpr.br}
	
	\author[Alexandre Kirilov]{Alexandre Kirilov }
	\address{
		Alexandre Kirilov:
		Departamento de Matem\'{a}tica,
		Universidade Federal do Paran\'{a},
		Caixa Postal 19081, Curitiba, PR 81531-990,
		Brazil
	}
	
	\email[Corresponding author]{akirilov@ufpr.br}
	
	\subjclass[2010]{Primary 58J40, 47A55; Secondary 35H10, 47A75}
	\keywords{Global hypoellipticity, Invariant Operators, Perturbations, Fourier series.}
	

\begin{abstract}
	Analyzing the behavior at infinity of the sequence of eigenvalues given by matrix symbol of a invariant operator with respect to a fixed elliptic operator, we obtain necessary and sufficient conditions to ensure that perturbations of globally hypoelliptic operators continue to have this property. As an application, we recover classical results about perturbations of constant vector fields on the torus and extend them  for more general classes of perturbations. Additionally, we construct examples of low order perturbations that destroy the global hypoellipticity, in the presence of diophantine phenomena.
		
	\keywords{Global hypoellipticity \and Invariant operators \and Perturbations \and Fourier series}
\end{abstract}
	
\maketitle
	
\tableofcontents

\section{Introduction}

In this note, we are concerned with perturbations of globally hypoelliptic operators of the form
\begin{equation}\label{L-dif}
L = D_t +  Q, \ (t,x) \in \T \times M,
\end{equation}
where $D_t=-i\partial/\partial t$, $\T \simeq \R / 2 \pi \Z$ is the flat torus, $M$ is a closed smooth manifold and $Q$ is a continuous linear operator on $\D(M)$. We also assume that $Q$ commutes with an elliptic operator defined on $M$ and that the domain of the adjoint operator ${Q}^*$ contains $C^\infty(M)$.

The assumption of commutativity with an elliptical operator $E$ introduces a Fourier analysis on the manifold $M$, whereas the hypothesis on the domain of the adjoint operator ensures  that the Fourier coefficients $\widehat{Qu}(\ell)= \sigma_{\!_Q}(\ell)\widehat{u}(\ell)$, for $u \in C^\infty(M)$, where $\sigma_{\!_Q}$ is the matrix symbol of the operator $Q$. For more details, see Section 4 of \cite{DR-Fmul}.

We recall that an operator $L$ is globally hypoelliptic (GH) on $\T\times{M}$ if the conditions $u \in \D(\T \times M)$ and $L u \in C^{\infty}(\T \times M)$ imply $u \in C^{\infty}(\T \times M)$.

The global hypoellipticity of \eqref{L-dif}, in the case where $Q$ is a first order normal pseudo-differential operator, that commutes with an elliptic operator $E$, was dealt in \cite{AvGrKi}. In that article, it was proved that the global hypoellipticity of this operator is related to the behavior, at infinity, of sequences of eigenvalues given by the matrix symbol of $Q$.

Following the approach of \cite{AvGrKi}, we study the global hypoellipticity of the operator \eqref{L-dif} and of the perturbed operator
\begin{equation*}
L(\epsilon) = D_t +  Q + \epsilon \mathcal{R}, \ (t,x) \in \T \times M,
\end{equation*}	
by analyzing the behavior of the sequence of eigenvalues of the restrictions
\begin{equation*}
Q_j \doteq Q\Big|_{E_{\lambda_j}} \textrm{ and \ } \ Q_{j}(\epsilon)  \doteq ( Q + \epsilon \mathcal{R} )\Big|_{E_{\lambda_j}},
\end{equation*}	
where $E_{\lambda_j}$ are the eigenspaces of $E$ and  $\epsilon$ is assumed to be small. Here, the perturbations $\mathcal{R}$ are also continuous linear operators on $\D(M)$ that commute with $E$, and the domain of $\mathcal{R}^*$ contains $C^\infty(M)$.

One of the inspirations for this note was the article \cite{BERG94} of A. Bergamasco, in which the author characterizes the global hypoellipticity of a class of perturbed operators defined on the torus by a Diophantine condition on the symbol of the operators. This type of condition was observed first by S. Greenfield and N. Wallach \cite{GW1}, in the case of constant coefficient operators, and by several other authors in the study of global hypoellipticity of operators on tori, see \cite{BERG99,BCM,BDGK15,CC00,GPY1,HOU82,Petr06,Petr11}.

In Theorem \ref{general-GH}, assuming that $Q$ is strongly diagonalizable, see Definition \ref{strong-diag}, we exhibit sufficient and necessary conditions to the global hypoellipticity of $L=D_t+Q$.

In section \ref{section4}, exploring the characterization of the strongly invariant operators and the strength of Theorem \ref{general-GH}, we present a class of perturbations on the torus, invariant with respect to Laplacian, and we obtain new results regarding perturbations of constant vector fields.  In particular, we recover results of Bergamasco \cite{BERG94} and extended them to a wider class of perturbations. For example, when $L=D_t +  \alpha D_x$ and $\alpha$ is an irrational number, we construct perturbations $\mathcal{R}$, of any order less than 1, such that $D_t +  \alpha D_x + \mathcal{R},$ is not (GH), see Theorem \ref{existence-nGH}.

In section \ref{section5}, motivated by T. Kato's  and F. Rellich's books, see \cite{kato} and \cite{Rellich}, we assume that $Q$ is normal and the eigenvalues and eigenvectors of $Q_{j}(\epsilon)= Q_j + \epsilon\mathcal{R}_j$ have analytic expansions is series of powers. Approaching the problem from this point of view, we have an algorithmic method for calculating (at least approximately) the eigenvalues of the operator $Q(\epsilon)$.

Finally, in the last section, we recover and extend results of section \ref{section4}, explicitly presenting the calculations of eigenvalues and eigenvectors of the perturbed operators and analyzing the global hypoellipticity.

\section{Fourier analysis relative to an elliptic operator}

Let $\N_0=\N\cup\{0\}$, $\langle \cdot, \cdot \rangle_{\C^d}$ be the usual inner product of $\C^d$, and $M$ be a n-dimensional closed smooth manifold endowed with a positive measure $dx$. Consider the space $L^2(M)$ of square integrable complex-valued functions on $M$, with respect to $dx$, with the inner product
\begin{equation*}
(f,g)_{L^2(M)} \doteq \int_{M}{f(x)\overline{g(x)}dx}, \ \ f,g \in L^2(M).
\end{equation*}

We denote by $\H^s(M)$ the standard Sobolev space of order $s$ on $M$, thus
\begin{equation*}\label{sobolev-dx}
C^\infty(M) = \bigcap_{s \in \R}\H^{s}(M) \mbox{ \ and \ } \D(M) = \bigcup_{s \in \R}\H^{s}(M).
\end{equation*}

Let $\Psi^m_{+e}(M)$ be the class of the classical positive elliptic pseudo-differential operators, of order $m\in \R$, and $E \in \Psi^m_{+e}(M)$ be a fixed elliptic operator. Following the construction proposed by Delgado and Ruzhansky, we introduce a discrete Fourier analysis in $M$ associated to $E$. Moreover, we assume that $m> 0$, which allows us to use the formula of Weyl to estimate the asymptotic behavior of the eigenvalues of $E$ (see \cite{Shubin}, sections 15$-$16), and characterize the Sobolev spaces in terms of this Fourier expansion (see \cite{GPR}). In this way:
\begin{enumerate}[{\it i.}]
	\item the spectrum $\spec(E)$ is a discrete subset of $\R$ and coincides with the set of all its eigenvalues.  Thus, the eigenvalues of E, counting the multiplicity, form a sequence
	$$ 0=\lambda_0 < \lambda_1 \leqslant \ldots \leqslant \lambda_j \longrightarrow \infty;$$
	
	\item from asymptotic formula of Weyl, there is  $c>0$, depending on $E$ and $M$, such that
	$$\lambda_j \sim \ c j^{\,{m}/{n}}, \  \mbox{ as }  j \to \infty;$$
	
	\item for each $j\in \N_0$, the eigenspace $E_{\lambda_j}$ of $E$ is a finite dimensional subspace of $C^\infty(M)$, and we will denote
	$$d_j\doteq \dim\, E_{\lambda_j};$$
	
	\item there is an orthonormal basis $\{e_{j}^{k}\ ; {1 \leqslant k \leqslant d_j} \mbox{ and } {j \in \N_0} \}$ for $L^2(M)$, consisting of smooth eigenfunctions of $E$ such that, for each $j\in \N_0$, $\{e_{j}^{1},e_{j}^{2},\ldots, e_{j}^{d_j}\}$ is an orthonormal basis of $E_{\lambda_j},$ and  $$L^2(M) = \bigoplus_{j\in \N_0} \, E_{\lambda_j};$$
	
	\item the Fourier coefficients of a function $f \in L^2(M)$, with respect to this orthonormal basis, are given by
	$$ \widehat{f}_j^{\, k}\doteq \big( f \, , \, e_{j}^{k} \big)_{L^2(M)}, \, 1\leqslant k \leqslant d_j, \  j \in \N_0.$$
	We also write $\widehat{f}_j = \big( \widehat{f}_j^{\, 1}, \ldots, \widehat{f}_j^{\, d_j}\big), \ j \in \N_0$; \smallskip
	
	\item any distribution $u \in \D(M)$ can be represented by its Fourier series
	\begin{equation*}\label{Fourier1}
	u = \sum_{j \in \N_0} \sum_{k=1}^{d_j} \widehat{u}_j^k e_j^k(x) = \sum_{j\in \N_0}\left\langle  \widehat{u}_j, \overline{e_j}(x) \right\rangle_{\C^{d_j}},
	\end{equation*}
	where $\widehat{u}_j^k = u(\overline{e_j^k}(x))$ and $\widehat{u}_j = \big( \widehat{u}_j^{\, 1}, \ldots, \widehat{u}_j^{\, d_j}\big), \ j \in \N_0$;
	
	\item for a distribution $u \in \D(M)$ we have
	\begin{equation} \label{Sobolev1matr}
	u \in \H^s(M) \Leftrightarrow
	\sum_{j\in\N_0} {\|\widehat{u}_j\|^2_{\C^{d_j}} \lambda_j^{\frac{2s}{m}} }< + \infty \Leftrightarrow
	\sum_{j\in\N_0} {\|\widehat{u}_j\|^2_{\C^{d_j}} j^{\frac{2s}{n}} }< + \infty.
	\end{equation}
\end{enumerate}

\begin{proposition}\label{prop-smooth-1}
	The three following statements on the series
	\begin{equation}\label{1-prop-smooth-1}
	\sum_{j \in \N_0} \sum_{k=1}^{d_j} c_j^{k} e_j^k(x),
	\end{equation}
	with complex coefficients $c_j^k$, are equivalent:
	\begin{enumerate}
		\item[i.] The series (\ref{1-prop-smooth-1}) converges in the $C^{\infty}(M)$ topology;
		\item[ii.] The series (\ref{1-prop-smooth-1}) is the Fourier expansion of some $f\in C^{\infty}(M)$. Furthermore $\widehat{f}_j^{\, k} = c_j^{k}$, for any $1\leqslant k \leqslant d_j$ and $j \in \N_0$;
		\item[iii.] For any integer $N$ we have
		\begin{equation}\label{2-prop-smooth-1}
		\sum_{j \in \N_0} \|c_j\|^2_{\C^{d_j}} j^{-N} < + \infty.
		\end{equation}
	\end{enumerate}
	Moreover, the following conditions are equivalent:
	\begin{enumerate}
		\item[iv.] The series (\ref{1-prop-smooth-1}) converges in the $\D(M)$ topology;
		\item[v.] The series (\ref{1-prop-smooth-1}) is the Fourier expansion of some $u\in \D(M)$.  Furthermore $\widehat{u}_j^{\, k} = c_j^{k}$, for any $1\leqslant k \leqslant d_j$ and $j \in \N_0$;
		\item[vi.] For some positive integer $N,$ (\ref{2-prop-smooth-1}) holds.
	\end{enumerate}
\end{proposition}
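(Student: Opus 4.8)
The plan is to establish the two chains of equivalences by showing that (i) $\Leftrightarrow$ (iii) and (ii) $\Leftrightarrow$ (iii), together with the corresponding distributional statements, relying on the Sobolev characterization \eqref{Sobolev1matr} and on the fact that $C^\infty(M)$ and $\D(M)$ are, respectively, the intersection and union of all the $\H^s(M)$. The key observation is that the growth condition \eqref{2-prop-smooth-1}, for all integers $N$, is exactly the statement that the formal series lies in $\bigcap_s \H^s(M)$, since $j^{-N}$ ranges over all powers $j^{2s/n}$ as $N = -2s/n$ ranges over the integers (and monotonicity in $s$ lets us pass from integer exponents to all real exponents).

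First I would prove (iii) $\Rightarrow$ (ii). Assuming \eqref{2-prop-smooth-1} for every integer $N$, I fix an arbitrary $s \in \R$, choose an integer $N$ with $N \geqslant -2s/n$, and note $j^{2s/n} \leqslant j^{-N}$ for $j \geqslant 1$; hence $\sum_j \|c_j\|^2_{\C^{d_j}} j^{2s/n} < \infty$, so by the right-hand criterion in \eqref{Sobolev1matr} the partial sums of \eqref{1-prop-smooth-1} form a Cauchy sequence in $\H^s(M)$ and converge to some $f_s \in \H^s(M)$. Since the $\H^s$-limits for different $s$ all agree with the $\D(M)$-limit (distributions being a Hausdorff space into which every $\H^s$ embeds continuously), there is a single $f \in \bigcap_s \H^s(M) = C^\infty(M)$ with this Fourier series, and by construction $\widehat{f}_j^{\,k} = c_j^k$. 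This simultaneously gives (iii) $\Rightarrow$ (i), since convergence in every $\H^s$ is convergence in $C^\infty(M)$ by the definition of the $C^\infty$ topology via the Sobolev scale.

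Next, (i) $\Rightarrow$ (ii) is immediate: a series converging in $C^\infty(M)$ converges in particular in $L^2(M)$, its sum $f$ is smooth, and taking $L^2$ inner products with $e_j^k$ and using continuity identifies $\widehat{f}_j^{\,k} = c_j^k$. For (ii) $\Rightarrow$ (iii): if $f \in C^\infty(M)$ has these Fourier coefficients, then $f \in \H^s(M)$ for every $s$, so by \eqref{Sobolev1matr} $\sum_j \|c_j\|^2 j^{2s/n} < \infty$ for all $s$; given an integer $N$, pick $s$ with $2s/n = N$ (e.g. $s = nN/2$) to get \eqref{2-prop-smooth-1}. This closes the first cycle. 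The second cycle (iv)--(vi) is entirely parallel: the only change is that $\D(M) = \bigcup_s \H^s(M)$, so membership corresponds to \eqref{2-prop-smooth-1} holding for \emph{some} $N$ rather than all $N$, and convergence in $\D(M)$ again follows because the $\H^s$-limit lands in $\D(M)$ and the identification of coefficients proceeds by testing against $\overline{e_j^k} \in C^\infty(M)$.

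I do not expect a genuine obstacle here, since the proposition is essentially a restatement of \eqref{Sobolev1matr} combined with the definitions of $C^\infty(M)$ and $\D(M)$; the only point requiring a little care is the passage between the two equivalent forms of the Sobolev norm ($\lambda_j^{2s/m}$ versus $j^{2s/n}$) and the reconciliation of "integer $N$" with "real $s$", which is handled by the Weyl asymptotics $\lambda_j \sim c\, j^{m/n}$ and the monotonicity of $j^{2s/n}$ in $s$. One should also remark that $j=0$ contributes a harmless bounded term throughout, so all the sums may be taken over $j \geqslant 1$ without affecting convergence.
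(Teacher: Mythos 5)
Your plan is the natural one, and since the paper does not actually prove Proposition~\ref{prop-smooth-1} (it defers to Delgado--Ruzhansky, citing \cite{DR14,DR-Fmul,DR14JFA}), there is no in-paper proof to compare against; filtering through the Sobolev scale via \eqref{Sobolev1matr} and then using $C^\infty(M) = \bigcap_s \H^s(M)$ and $\D(M) = \bigcup_s \H^s(M)$ is exactly the expected route, and the closed cycle (iii)$\Rightarrow$(i)$\Rightarrow$(ii)$\Rightarrow$(iii) you set up is sound.

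There are, however, two sign slips in the bookkeeping between the integer $N$ of condition~\emph{iii} and the Sobolev index $s$. In the step (iii)$\Rightarrow$(ii) you ``choose an integer $N$ with $N \geqslant -2s/n$'' and then assert $j^{2s/n} \leqslant j^{-N}$ for $j \geqslant 1$; but $N \geqslant -2s/n$ gives $-N \leqslant 2s/n$, hence $j^{-N} \leqslant j^{2s/n}$, the reverse of what you need. You should instead choose $N \leqslant -2s/n$ (this is legitimate because (iii) assumes \eqref{2-prop-smooth-1} for \emph{all} integers $N$, not only the positive ones), so that $-N \geqslant 2s/n$ and the desired majorization $j^{2s/n} \leqslant j^{-N}$ holds. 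Symmetrically, in (ii)$\Rightarrow$(iii) you pick $s = nN/2$, i.e.\ $2s/n = N$; but then the Sobolev criterion \eqref{Sobolev1matr} yields $\sum_j \|c_j\|^2 j^{N} < \infty$, not the required $\sum_j \|c_j\|^2 j^{-N} < \infty$. Take $s = -nN/2$ instead, so that $2s/n = -N$. Once these two signs are corrected, the argument closes correctly, and the distributional cycle (iv)--(vi) goes through exactly as you describe (noting that in (vi) one may assume $N$ positive with no loss: if $u \in \H^s(M)$ for some $s$, one can decrease $s$ at will, which corresponds to enlarging $N$). Your parenthetical remark about the $j=0$ term is also appropriate; with the usual convention of discarding the $j=0$ summand (or replacing $j^{-N}$ by $(1+j)^{-N}$), all the series in the statement are well defined.
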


The  Fourier series with respect to variable $x$  of $u \in \D(\T \times M)$ is
\begin{equation}\label{x-fourier-series-distrib}
\sum_{j\in\N_0} \sum_{k=1}^{d_j} \widehat{u}_j^{\,k}(t) e_j^k(x) = \sum_{j\in\N_0} \big\langle \widehat{U}_j(t), \overline{e_j}(x) \big\rangle_{\C^{d_j}},
\end{equation}
where, for each  $j \in \N_0$, $\widehat{u}_j^{\,k}(t) = u\big(e_j^k(x)\big),$  ${e_j}(x)=\big(e_j^1(x), \ldots, e_j^{d_j}(x)\big)$, and
$\widehat{U}_j(t) = \big(\widehat{u}_j^1(t), \ldots, \widehat{u}_j^{d_j}(t)\big)$.

\smallskip
\begin{proposition}\label{prop-smooth-2}
	The three following statements on the series
	\begin{equation}\label{1-prop-smooth-2}
	\sum_{j \in \N_0} \sum_{k=1}^{d_j} c_j^k(t) e_j^k(x),
	\end{equation}
	with $c_j^k \in C^{\infty}(\T)$,  are equivalent:
	\begin{enumerate}[i.]
		\item the series converges in the $C^{\infty}(\T \times M)$ topology;
		\item the series is the $x$-Fourier expansion of some $f\in C^{\infty}(\T \times M)$. Furthermore $\widehat{f}_j^{\, k}(t) = c_j^{k}(t)$, for any $t \in \T, \ 1\leqslant k \leqslant d_j$ and $j \in \N_0$;
		\item for any $\alpha \in \N_0$ and integer $N$,
		\begin{equation} \label{2-prop-smooth-2}
		\max_{t\in \T}\|\partial_t^\alpha c_j(t)\|_{\C^{d_j}} = \mathcal{O}(j^{-N}), \textrm{ as } j \to \infty.
		\end{equation}
	\end{enumerate}
	Moreover, the following conditions on the series \eqref{1-prop-smooth-2} are equivalent:
	\begin{enumerate}
		\item[iv.] The series  converges in the $\D(\T \times M)$ topology;
		\item[v.] The series is the Fourier expansion of some $u\in\D(\T\times M)$.  Furthermore $\widehat{u}_j^{\, k}(t) = c_j^{k}(t)$, for any $t\in\T, \ 1\leqslant k \leqslant d_j$ and $j \in \N_0$;
		\item[vi.] For some integer $N$, \eqref{2-prop-smooth-2} holds.
	\end{enumerate}
\end{proposition}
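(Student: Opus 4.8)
The plan is to deduce the whole proposition from Proposition~\ref{prop-smooth-1}, applied not on $M$ but on the closed manifold $\T\times M$ equipped with the positive elliptic operator $\mathcal E:=(1+D_t^2)^{m/2}+E\in\Psi^m_{+e}(\T\times M)$, whose orthonormal eigenfunctions are the products $(2\pi)^{-1/2}e^{i\tau t}e_j^k(x)$, with eigenvalues $(1+\tau^2)^{m/2}+\lambda_j$. Expanding each coefficient in its $t$-Fourier series, $c_j^k(t)=\sum_{\tau\in\Z}b^k_{\tau,j}e^{i\tau t}$ with $b^k_{\tau,j}=\widehat{c_j^k}(\tau)$, the series \eqref{1-prop-smooth-2} becomes, after regrouping, the $\mathcal E$-Fourier expansion of $\sum_{\tau,j,k}b^k_{\tau,j}e^{i\tau t}e_j^k(x)$ on $\T\times M$, so the statements (i)--(vi) can be read off from the corresponding statements of Proposition~\ref{prop-smooth-1} once we understand the dictionary $\{c_j^k(t)\}\leftrightarrow\{b^k_{\tau,j}\}$. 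Equivalently, one can argue directly, carrying the parameter $t$ through the proof of Proposition~\ref{prop-smooth-1}; I indicate that route as well.

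The heart of the matter is this dictionary. On one side, integrating by parts $\beta$ times in $t$ gives $|b^k_{\tau,j}|\le|\tau|^{-\beta}\max_{t\in\T}|\partial_t^\beta c_j^k(t)|$ for $\tau\neq0$, together with the trivial $|b^k_{\tau,j}|\le\max_{t\in\T}|c_j^k(t)|$; on the other side, $\partial_t^\alpha c_j^k(t)=\sum_\tau(i\tau)^\alpha b^k_{\tau,j}e^{i\tau t}$ and Cauchy--Schwarz (absorbing $\sum_\tau(1+\tau^2)^{-1}<\infty$) bound $\max_t\|\partial_t^\alpha c_j(t)\|_{\C^{d_j}}$ by a weighted $\ell^2$-norm of $(b^k_{\tau,j})_\tau$. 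Together with the Weyl asymptotics for the eigenvalues of $\mathcal E$ (which trade the size of the enumeration index for the size of the eigenvalue), these show that condition (iii) is equivalent to the rapid-decay condition of Proposition~\ref{prop-smooth-1}(iii) for $\{b^k_{\tau,j}\}$ on $\T\times M$, and condition (vi) to the polynomial-growth condition of Proposition~\ref{prop-smooth-1}(vi).

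With the dictionary in place, the equivalences follow mechanically. If $f\in C^\infty(\T\times M)$ (resp.\ $u\in\D(\T\times M)$) has $x$-Fourier coefficients $c_j^k(t)$, then its $\mathcal E$-Fourier coefficients are proportional to $b^k_{\tau,j}$, and conversely; hence (ii)$\Leftrightarrow$(iii) and (v)$\Leftrightarrow$(vi) follow from the corresponding equivalences in Proposition~\ref{prop-smooth-1} applied on $\T\times M$, read through the translation. For (i) (resp.\ (iv)): once $\{b^k_{\tau,j}\}$ has the appropriate behaviour the triple series $\sum_{\tau,j,k}b^k_{\tau,j}e^{i\tau t}e_j^k(x)$ converges absolutely in a fixed seminorm of $C^\infty(\T\times M)$ (resp.\ in one space $\H^{-N}(\T\times M)$), hence may be summed in any grouping; grouping by $j$ gives the partial sums of \eqref{1-prop-smooth-2}, which therefore converge in $C^\infty(\T\times M)$ (resp.\ in $\D(\T\times M)$), and the reverse implications use only the continuity of the Fourier-coefficient maps. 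In the direct approach, $(\mathrm i)\Rightarrow(\mathrm{ii})$ comes from the continuity of $g\mapsto(g(t,\cdot),e_j^k)_{L^2(M)}$; $(\mathrm{ii})\Rightarrow(\mathrm{iii})$ from writing $\partial_t^\alpha c_j^k(t)=\lambda_j^{-p}\bigl(E^p\partial_t^\alpha f(t,\cdot),e_j^k\bigr)_{L^2(M)}$ for $j\ge1$, bounding it by $\lambda_j^{-p}\sup_{t\in\T}\|E^p\partial_t^\alpha f(t,\cdot)\|_{L^2(M)}<\infty$, and using Weyl to make $\lambda_j^{-p}$ beat any power of $j$; and $(\mathrm{iii})\Rightarrow(\mathrm i)$ from estimating the tails of \eqref{1-prop-smooth-2} in the seminorms $\sup_{t\in\T}\|\partial_t^\alpha(\,\cdot\,)\|_{\H^s(M)}$ by means of \eqref{Sobolev1matr}.

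The step I expect to be the main obstacle is securing uniformity in $t\in\T$ of all the estimates in the $x$-variable: in the product-manifold picture it is automatic, whereas in the direct picture it is precisely what the compactness of $\T$ together with the device of trading each factor $\lambda_j^{-1}$ for a power of $E$ acting on the still-smooth function $\partial_t^\alpha f$ provides. The remaining care goes into justifying the interchange of the summations over $(j,k)$ and over $\tau$ in the relevant topology --- routine once the series is placed inside a single seminorm, or a single negative-order Sobolev space, of $\T\times M$ --- and into the bookkeeping that matches condition (vi) with exactly the polynomial growth of $\{b^k_{\tau,j}\}$ in the joint variable.
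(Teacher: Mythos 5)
The paper offers no proof of this proposition (it is imported from the Delgado--Ruzhansky framework via the references at the end of Section 2), so your argument must stand on its own. For the smooth half, (i)--(iii), it does: the direct route is exactly the expected proof ((ii)$\Rightarrow$(iii) by writing $\partial_t^\alpha c_j^k(t)=\lambda_j^{-p}(E^p\partial_t^\alpha f(t,\cdot),e_j^k)_{L^2(M)}$ and using Weyl, (iii)$\Rightarrow$(i) by tail estimates in the seminorms $\sup_{t}\|\partial_t^\alpha(\cdot)\|_{\H^s(M)}$), and your dictionary works there because \emph{rapid} decay of $b^k_{\tau,j}$ in $(\tau,j)$ jointly makes the sums $\sum_\tau|\tau|^\alpha|b^k_{\tau,j}|$ converge with room to spare. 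One caveat on the product-manifold route: $(1+D_t^2)^{m/2}+E$ is positive and elliptic but in general not a \emph{classical} pseudo-differential operator on $\T\times M$ (its principal symbol $|\tau|^m+e_m(x,\xi)$ fails to be smooth on the cosphere bundle along $\tau=0$ and $\xi=0$), so Proposition \ref{prop-smooth-1} cannot be invoked verbatim; you need to observe that only the explicit eigendecomposition, the Weyl-type count for the eigenvalues $(1+\tau^2)^{m/2}+\lambda_j$, and the resulting Sobolev characterization are actually used, and check these by hand.

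The genuine gap is in the distributional half, where your dictionary is not an equivalence. From (vi) one does get $|b^k_{\tau,j}|\le C\min\{j^{N_0},|\tau|^{-\beta}j^{N_\beta}\}$ and hence (iv) and (v); but in the reverse direction, polynomial growth of $b^k_{\tau,j}$ in the joint variable --- which is all that Proposition \ref{prop-smooth-1}(vi) on $\T\times M$ extracts from (v) --- does not yield $\max_t\|\partial_t^\alpha c_j(t)\|=\mathcal{O}(j^{-N})$, since $\partial_t^\alpha c_j^k(t)=\sum_\tau(i\tau)^\alpha b^k_{\tau,j}e^{i\tau t}$ is not even absolutely summable under a polynomial bound. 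This is not repairable bookkeeping: the implication (v)$\Rightarrow$(vi) is false as literally stated. Take $c_j^1(t)=e^{j}e^{ie^{2j}t}$ and all other $c_j^k\equiv0$; for any test function $\phi$, with $\psi_j(t)=(\phi(t,\cdot),e_j^1)_{L^2(M)}$ rapidly decreasing in $j$ uniformly in $t$ together with its $t$-derivatives, one integration by parts gives $|\int_\T e^{ie^{2j}t}\psi_j(t)\,dt|\le 2\pi e^{-2j}\max_t|\psi_j'(t)|$, so the series converges in $\D(\T\times M)$ and (iv), (v) hold, while $\max_t|c_j^1(t)|=e^{j}$ violates \eqref{2-prop-smooth-2} for every integer $N$ already at $\alpha=0$. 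What your product-manifold reduction actually proves is the correct version of (vi), namely a bound of fixed distributional order in $t$ such as $\|c_j\|_{H^{-N}(\T)}=\mathcal{O}(j^{N})$ for some $N$; you should either prove that statement or restrict the claimed equivalence to the implications (vi)$\Rightarrow$(iv)$\Leftrightarrow$(v), which are the only ones the paper uses (together with (ii)$\Leftrightarrow$(iii)) and which your argument does deliver.
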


The next result and definitions are a consequence of the results and remarks in Section 4 of \cite{DR-Fmul}.

\begin{proposition}\label{T-DR-1}
	Let ${\mathcal{A}}: \D(M) \to \D(M)$ be a continuous linear operator such that the domain of $\mathcal{A}^*$ contains $C^\infty(M)$. The following conditions are equivalent:
	\begin{enumerate}[i.]
		\item The operators $\mathcal{A}$ and $E$ commute, that is, $[\mathcal{A},E]=0$ on $\D(M)$;
		\item For each $j\in\N_0$, we have $\mathcal{A}(E_j)\subset E_j$;
		\item For each $j \in \N_0$, there is a matrix ${\mathcal{A}}_j \in \C^{d_j \times d_j}$ such that, for any $f \in C^{\infty}(M),$
		\begin{equation}\label{matrix-symbol}
		\widehat{({\mathcal{A}}  f)}_j = {\mathcal{A}}_j \widehat{f}_j.
		\end{equation}
		Moreover, ${\mathcal{A}}_j$ is the matrix of the restriction ${\mathcal{A}}\left|_{E_j}\right.$.
		
	\end{enumerate}
\end{proposition}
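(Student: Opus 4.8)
The plan is to establish the cyclic chain of implications $i.\Rightarrow ii.\Rightarrow iii.\Rightarrow i.$, using the spectral decomposition $L^2(M)=\bigoplus_{j}E_{\lambda_j}$ and the Fourier characterizations of $C^\infty(M)$ and $\D(M)$ recorded in items \textit{iv.}--\textit{vii.} of the list above and in Proposition \ref{prop-smooth-1}.

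For $i.\Rightarrow ii.$: fix $j\in\N_0$ and take $v\in E_{\lambda_j}$; since $E_{\lambda_j}\subset C^\infty(M)$, the identity $[\mathcal{A},E]=0$ gives $E(\mathcal{A}v)=\mathcal{A}(Ev)=\lambda_j\,\mathcal{A}v$, so $\mathcal{A}v$ is again an eigendistribution of $E$ for the eigenvalue $\lambda_j$. Here I would be slightly careful: a priori $\mathcal{A}v$ lies only in $\D(M)$, but by elliptic regularity (or directly from the Fourier characterization, since $\widehat{(\mathcal{A}v)}_i=0$ for $i\neq j$ forces a finite Fourier series) it lies in $C^\infty(M)$ and in fact in $E_{\lambda_j}$; hence $\mathcal{A}(E_{\lambda_j})\subset E_{\lambda_j}$.

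For $ii.\Rightarrow iii.$: given $ii.$, define $\mathcal{A}_j\in\C^{d_j\times d_j}$ to be the matrix of the linear map $\mathcal{A}|_{E_{\lambda_j}}$ in the orthonormal basis $\{e_j^1,\dots,e_j^{d_j}\}$. For $f\in C^\infty(M)$ write $f=\sum_j\langle\widehat f_j,\overline{e_j}\rangle_{\C^{d_j}}$; the series converges in $C^\infty(M)$ by Proposition \ref{prop-smooth-1}, so continuity of $\mathcal{A}$ on $\D(M)$ lets me apply $\mathcal{A}$ termwise, and since each term lies in $E_{\lambda_j}$ and $\mathcal{A}$ acts on it by $\mathcal{A}_j$, reading off the $j$-th Fourier block yields $\widehat{(\mathcal{A}f)}_j=\mathcal{A}_j\widehat f_j$; the final sentence of $iii.$ is then just the definition of $\mathcal{A}_j$. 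The one delicate point is justifying the termwise action of $\mathcal{A}$ — this is where the hypothesis that $\operatorname{dom}(\mathcal{A}^*)\supset C^\infty(M)$ enters, exactly as in Section 4 of \cite{DR-Fmul}: it guarantees $\widehat{(\mathcal{A}f)}_j^{\,k}=(\mathcal{A}f)(\overline{e_j^k})=f(\mathcal{A}^*\overline{e_j^k})$ makes sense and depends continuously on the Fourier data, so I would phrase that step by pairing $\mathcal{A}f$ against the test functions $\overline{e_j^k}$ rather than by an unjustified interchange of $\mathcal{A}$ with an infinite sum.

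For $iii.\Rightarrow i.$: I need $[\mathcal{A},E]=0$ on all of $\D(M)$, not just on $C^\infty(M)$. On $C^\infty(M)$ it is immediate, since $\widehat{(Ef)}_j=\lambda_j\widehat f_j$ gives $\widehat{(\mathcal{A}Ef)}_j=\lambda_j\mathcal{A}_j\widehat f_j=\mathcal{A}_j\widehat{(Ef)}_j=\widehat{(E\mathcal{A}f)}_j$. To pass to $u\in\D(M)$ I would invoke density of $C^\infty(M)$ in $\D(M)$ together with the assumed continuity of both $\mathcal{A}$ and $E$ on $\D(M)$ (alternatively, pair against test functions and move everything onto the smooth side via the adjoints). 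The main obstacle throughout is bookkeeping with the domains: $\mathcal{A}$ and $\mathcal{A}^*$ are only assumed defined with the stated domains, so every manipulation must be routed through pairings with smooth eigenfunctions, and I would keep citing \cite{DR-Fmul} for the legitimacy of writing $\widehat{(\mathcal{A}f)}_j=\mathcal{A}_j\widehat f_j$ for $f\in C^\infty(M)$.
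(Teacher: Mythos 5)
The paper does not actually prove Proposition \ref{T-DR-1}: immediately before stating it, the authors write that ``the next result and definitions are a consequence of the results and remarks in Section 4 of \cite{DR-Fmul}'' and leave the proof to that reference. So there is no in-text argument to compare against; your proposal is effectively a reconstruction of the Delgado--Ruzhansky argument rather than an alternative to anything in this paper.

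That said, your outline is sound and correctly identifies the two places where care is needed: the interchange of $\mathcal{A}$ with the Fourier series in $ii.\Rightarrow iii.$, and the extension from $C^\infty(M)$ to $\D(M)$ in $iii.\Rightarrow i.$. One refinement worth making explicit in $ii.\Rightarrow iii.$: rather than writing $(\mathcal{A}f)(\overline{e_j^k})=f(\mathcal{A}^*\overline{e_j^k})$ and leaving it there, the clean route is to observe that $ii.$ together with orthogonality of the $E_{\lambda_j}$ forces $\mathcal{A}^*(E_{\lambda_j})\subset E_{\lambda_j}$ as well, so $\mathcal{A}^* e_j^k \in E_{\lambda_j}$; then $\widehat{(\mathcal{A}f)}_j^{\,k}=(\mathcal{A}f,e_j^k)_{L^2}=(f,\mathcal{A}^*e_j^k)_{L^2}$ sees only the $j$-th Fourier block of $f$, which gives $\widehat{(\mathcal{A}f)}_j=\mathcal{A}_j\widehat f_j$ with no limiting procedure at all. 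This is exactly the role of the hypothesis $\operatorname{dom}(\mathcal{A}^*)\supset C^\infty(M)$, which you correctly flagged. Your $i.\Rightarrow ii.$ step (eigendistribution plus elliptic regularity, or directly the Fourier characterization of Proposition \ref{prop-smooth-1}) and the density argument for $iii.\Rightarrow i.$ are both fine.
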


\begin{definition}\label{A-is-E-invariant}
	Let ${\mathcal{A}}: \D(M) \to \D(M)$ be a continuous linear operator such that the domain of $\mathcal{A}^*$ contains $C^\infty(M)$. We say that ${\mathcal{A}}$ is (strongly) invariant with respect to the operator $E$, or simply $E-$invariant, if it satisfies any of the equivalent conditions of Proposition \ref{T-DR-1}.
\end{definition}

\begin{definition}\label{order-matrix}
	Let $\mathcal{A}$ be an $E-$invariant operator, then:
	\begin{enumerate}[{\it i.}]
		\item  the family of matrices $\sigma_{{\mathcal{A}}} = \{\mathcal{A}_j\ \doteq (a_{k \ell})_{1\leqslant k, \ell \leqslant d_j}; \ j \in \N_0\}$, given by \eqref{matrix-symbol}, is called the  matrix symbol of ${\mathcal{A}}$  and
		\begin{equation*}
		{\mathcal{A}} f  (x) = \sum_{j \in \N_0} \left\langle {\mathcal{A}}_j^\top \widehat{f}_j , \overline{e_{\, j}}(x)  \right\rangle_{\C^2}, \ f \in C^{\infty}(M);
		\end{equation*}
		\item we say that the symbol $\sigma_{{\mathcal{A}}}$ has moderate growth if there are constants $C>0$, $N\in \R$ and $j_0\in\N$ such that
		\begin{equation}\label{moderate-symbol}
		\|\mathcal{A}_j\|  = \max_{1\leqslant k, \ell \leqslant d_j} |a_{k \ell}| \leqslant C j^{N},  \ j \geqslant j_0;
		\end{equation}
		
		\item if the symbol $\sigma_{{\mathcal{A}}}$ has moderate growth, then the order of $\mathcal{A}$ is the number
		$$ord(\mathcal{A}) \doteq     \inf \{ N \in \R ; \ \eqref{moderate-symbol} \ holds \}.$$
	\end{enumerate}
\end{definition}

The proofs of the results presented in this section can be found in \cite{DR14,DR-Fmul,DR14JFA}. For a version in the case where the basis of eigenfunctions is not orthogonal see \cite{RuzTok16IMRN}. Some applications of such analysis to spectral theory may be found in \cite{DelgRuzTok17}. Finally, for characterizations of other spaces, e.g. Gevrey spaces and ultradistributions, we refer the reader to \cite{DasRuz16}.

\section{Global hypoellipticity of invariant operators}

In this section we study the global hypoellipticity of the operator
\begin{equation*}
L = D_t + Q, \ (t, x) \in \T \times M,
\end{equation*}
where $Q$ is an $E-$invariant operator with matrix symbol $\{Q_j; \ j \in \N_0\}$ of moderate growth.

Firstly, for any $u \in \D(\T \times M)$, from \eqref{x-fourier-series-distrib} we have
\begin{equation*}\label{C}
Q u(t,x)  = \sum_{j\in \N_0} \sum_{k=1}^{d_j} u_j^k(t) \, Q e_j^k(x) = \sum_{j\in \N_0} \big\langle Q_j^\top \widehat{U}_j(t),  \overline{{e}_j}(x)  \big\rangle_{\C^{d_j}},
\end{equation*}
and, from the uniqueness of representation in Fourier series, the equation
\begin{equation*}\label{(D+Q)u=f}
\big(D_t+ Q \big)u(t,x) = f(t,x),
\end{equation*}
with $f \in C^\infty(\T\times M)$, is equivalent to the sequence of ordinary differential equations
\begin{equation}\label{syst-abstrac}
\big( D_t+ Q_j^\top \big) \widehat{U}_j(t) = \widehat{F}_j(t),
\end{equation}
where $\widehat{U}_j(t) = \big(\widehat{u}_j^1(t), \ldots, \widehat{u}_j^{\, d_j}(t)\big)$ and $\widehat{F}_j(t) = \big(\widehat{f}_j\,\!^1(t), \ldots, \widehat{f}_j\,\!^{d_j}(t)\big)$ with $\widehat{u}_j^{\,k}(t)$ and $\widehat{f}_j\,\!^{k}(t)$ in $C^\infty (\T)$, for $1 \leqslant k \leqslant d_j$ and $j \in \N_0$.

In order to study the behavior of the solutions of equation $Lu =f$ in function of the sequences of eigenvalues generate by the  matrices $Q_j$, we introduce the following definition.

\begin{definition}\label{strong-diag}
	We say that an $E-$invariant operator $Q$ is strongly diagonalizable, if there exists $j_0 \in \N$ and a sequence of matrices $\{S_j\}_{j \geqslant j_0}$ such that
	\begin{equation*}
	Q_j  = S_j \, \mbox{diag}(\sigma_j^1, \dots, \sigma_j^{d_j})\, S_j^{-1}, \ j \geqslant j_0
	\end{equation*}
	satisfying
	\begin{equation*}
	\|S_j \| \leqslant k j^{\, s }  \  \textrm{ and } \ \|S_j^{-1} \| \leqslant k j^{\, r},
	\end{equation*}
	for some $k >0$ and  $r,s \in \R$, where $\|\cdot\|$ is the usual operator norm.
\end{definition}

\begin{example}\label{example-normal-case}
	Let $Q$ be a normal $E-$invariant operator, that is\/ $QQ^*=Q^*Q$, and write $Q = A+iB$, where
	$$ A = \frac{Q+Q^*}{2} \mbox{ \ and \ } B = \frac{Q-Q^*}{2i}.$$
	It is easy to verify that the operators $A$ and $B$ are self-adjoint and
	$$[A, B] = [A, E] = [B, E] =0.$$
	
	Let us denote by $A_j$ and $B_j$ the matrix representations of the restrictions of the $E-$invariant operators $A$ and $B$ to the eigenspaces $E_{\lambda_j}$, and by $\mu_j^k$ and $\nu_j^k,$ with $1\leqslant k\leqslant d_j$, the (real) eigenvalues of $A_j$ and $B_j$, respectively.
	
	By a known result of linear algebra, see section 6.5 of \cite{Hof}, for each $j \in \N_0$, there is a unitary matrix $S_j$ such that
	$$Q_j  = S_j \, \mbox{diag}(\mu_j^1+i\nu_j^1, \dots, \mu_j^{d_j} +i\nu_j^{d_j})  S_j^{-1}.$$
	
	This shows that any normal $E-$invariant operator defined on $M$  is strongly diagonalizable.
\end{example}

Turning back to \eqref{syst-abstrac}, let us assume that $Q$ is strongly diagonalizable. Then, for each $j \in \N_0$, that equation is equivalent to the diagonal system
\begin{equation}\label{general-syst-v}
\left (
\begin{array}{cccc}
\partial_t + i \sigma_j^1       & 0                          &  \ldots & 0                            \\
0                      & \partial_t + i \sigma_j^2  &  \ldots & \vdots                            \\
\vdots                 & \vdots                     &  \ddots & 0                       \\
0                      & \ldots                          &  0      & \partial_t + i \sigma_j^{d_j}
\end{array}
\right) \widehat{V}_j(t) = \widehat{G}_j(t),
\end{equation}
where $\widehat{V}_j(t) = i S_j \widehat{U}_j(t)$, $\widehat{G}_j(t) = i S_j \widehat{F}_j(t)$ and $t \in \T$.

Moreover, there are constants $C,r$ and $s$, and $j_0 \in \N$, such that, if $j \geqslant j_0$ then
\begin{equation}\label{v=u}
\|\widehat{V}_j(t)\| \leqslant C j^{s} \, \|\widehat{U}_j(t)\| \ \textrm{ and } \ \|\widehat{U}_j(t)\| \leqslant C j^{r} \, \|\widehat{V}_j(t)\|, \ t \in \T.
\end{equation}

Therefore the entries of the vector $\widehat{V}_j(t)$ have the same type of growth (or decay) as the entries of the vector $\widehat{U}_j(t)$, when $j \to \infty$. Clearly, the same relation holds between to the entries of the vectors $\widehat{G}_j(t)$ and $\widehat{F}_j(t)$.

For the sake of convenience of notation, let us reorder the terms of sequence $\{\sigma_k^{j}\}$ in the following way
\begin{equation}\label{mu-nu-ell}
\{\sigma_\ell\}_{\ell \in \N_0} \doteq  \ \{ \sigma_1^1, \ldots, \sigma_1^{d_1}, \sigma_2^{1}, \ldots, \sigma_2^{d_2}, \ldots, \sigma_j^{1}, \ldots, \sigma_j^{d_j}, \ldots \}.
\end{equation}

\begin{proposition}\label{Gamma-finite}
	If $L$ is (GH), then the set\, $\Gamma_Q = \{\ell \in \N; \ \sigma_\ell \in \Z\}$ is finite.
\end{proposition}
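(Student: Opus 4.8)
The plan is to argue by contraposition: assuming $\Gamma_Q$ is infinite, I will construct a distribution $u \in \D(\T\times M)$ with $Lu \in C^\infty(\T\times M)$ (in fact $Lu = 0$) but $u \notin C^\infty(\T\times M)$. Since $L$ is equivalent, after strong diagonalization, to the diagonal system \eqref{general-syst-v}, it suffices to work one scalar equation at a time: for each index $\ell$ with $\sigma_\ell \in \Z$, the operator $\partial_t + i\sigma_\ell$ on $\T$ has a nontrivial smooth kernel, namely $t \mapsto e^{-i\sigma_\ell t}$ (recall $D_t = -i\partial_t$, so $(\partial_t + i\sigma_\ell)e^{-i\sigma_\ell t}=0$). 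This is exactly the classical mechanism behind failure of global hypoellipticity for $D_t + c$ when $c \in \Z$.

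The key steps, in order, are as follows. First, fix an enumeration realizing $\Gamma_Q = \{\ell_1 < \ell_2 < \cdots\}$ as an infinite set, and for each $p$ let $j(p)$ be the block index to which $\ell_p$ belongs in the ordering \eqref{mu-nu-ell}; since each block $E_{\lambda_j}$ is finite-dimensional, $j(p) \to \infty$ as $p \to \infty$. Second, in the diagonalized variables define $\widehat{V}_{j(p)}(t)$ to have the entry $e^{-i\sigma_{\ell_p} t}$ in the slot corresponding to $\ell_p$ and zero elsewhere, and set all other $\widehat{V}_j \equiv 0$; then undo the change of variables via $\widehat{U}_j(t) = -i S_j^{-1}\widehat{V}_j(t)$ (for $j \geqslant j_0$) to obtain a candidate sequence $\{\widehat{U}_j(t)\}$. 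By construction this sequence solves \eqref{syst-abstrac} with right-hand side $\widehat{F}_j \equiv 0$, hence assembles to $u$ with $Lu = 0 \in C^\infty(\T\times M)$. Third, check $u \in \D(\T\times M)$: using \eqref{v=u} and the moderate-growth/strong-diagonalizability bounds $\|S_j^{-1}\| \leqslant kj^r$, one has $\max_t\|\widehat{U}_{j(p)}(t)\| \leqslant C j(p)^r$, a polynomial bound, so criterion \eqref{2-prop-smooth-2}(vi) of Proposition \ref{prop-smooth-2} is met and $u$ is a well-defined distribution. Fourth, check $u \notin C^\infty(\T\times M)$: here I use \eqref{v=u} in the reverse direction, $\|\widehat{V}_j(t)\| \leqslant Cj^s\|\widehat{U}_j(t)\|$, to get $\max_t\|\widehat{U}_{j(p)}(t)\| \geqslant C^{-1} j(p)^{-s}\max_t\|\widehat{V}_{j(p)}(t)\| = C^{-1} j(p)^{-s}$, which decays only polynomially and therefore violates the rapid-decay characterization \eqref{2-prop-smooth-2}(iii) of smoothness along the infinite sequence $j(p) \to \infty$; hence $u$ is not smooth.

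The main obstacle is bookkeeping at the interface between the abstract system \eqref{syst-abstrac} and its diagonalization \eqref{general-syst-v}: one must be careful that the strongly diagonalizable decomposition is only guaranteed for $j \geqslant j_0$, so the construction should place all the "bad" indices $\ell_p$ in blocks with $j(p) \geqslant j_0$ (harmless, since dropping finitely many indices keeps $\Gamma_Q$ infinite), and one must make sure the transpose $Q_j^\top$ appearing in \eqref{syst-abstrac} rather than $Q_j$ does not spoil the argument — but $Q_j^\top = (S_j^{-1})^\top \mathrm{diag}(\sigma_j^1,\dots,\sigma_j^{d_j}) S_j^\top$ has the same diagonal entries $\sigma_j^k$ and conjugating matrices with the same norm bounds, so the same scalar kernels $e^{-i\sigma_{\ell} t}$ work verbatim. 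A secondary technical point is verifying that the assembled $u$ genuinely lies in $\D(\T\times M)$ and not in $C^\infty$, which is precisely what the two-sided estimate \eqref{v=u} is designed to deliver; the polynomial-versus-rapid-decay dichotomy does the rest.
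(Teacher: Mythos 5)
Your argument is correct and follows essentially the same route as the paper: exhibit a distributional solution of $Lu=0$ supported on the infinitely many indices with $\sigma_\ell\in\Z$, whose relevant Fourier coefficients have unit modulus and hence do not decay, so $u\in\D(\T\times M)\setminus C^\infty(\T\times M)$. You merely spell out, somewhat more carefully than the paper does, the passage back from the diagonalized $\widehat{V}_j$-variables to $\widehat{U}_j=-iS_j^{-1}\widehat{V}_j$ via the two-sided estimate \eqref{v=u}, and you note the transpose and finitely-many-low-indices technicalities, which the paper handles implicitly by working directly with the diagonalized system.
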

\begin{proof}
	If we had $\Gamma_Q$ infinite, then it would be possible to obtain an infinite subset $\{ {\ell}_j \in \N; \sigma_{\ell_{j}} \in \Z\}$ and construct a sequence of  functions $v_\ell \in C^{\infty}(\T)$ defined by $v_\ell(t) = \exp(-i\sigma_{\ell} t),$ if $\ell = \ell_j,$ for some $j \in \N$, and $v_\ell(t) \equiv 0$ otherwise.
	
	Since $|v_{\ell_j}(t)|=1,$ for all $j \in \N$, by Proposition \ref{prop-smooth-2} we would have
	$$v = \sum_{\ell \in \N_0} v_\ell(t) e_{\ell}(x) = \sum_{j \in \N_0} \exp(-i \sigma_{\ell_j} t) e_{\ell_j}(x) \in \D(\T \times M) \setminus C^{\infty}(\T \times M).$$
	
	Finally, from \eqref{general-syst-v}, we would have $L v  = 0$ implying that $L$ is not (GH).
	
\end{proof}

In view of the last Proposition, in the study of the global hypoellipticity of $L$, we may assume without loss of generality that $\sigma_\ell \notin \Z$, for all $\ell \in \Z$.

In this situation, the equation
\begin{equation}\label{eq-el1}
(\partial_t + i \sigma_\ell)v_\ell(t) =  g_\ell(t)
\end{equation}
has a unique solution given by
\begin{equation}\label{sol-1}
v_\ell(t) = \dfrac{1}{1 - \exp(- 2 \pi i \sigma_\ell)} \int_{0}^{2 \pi} \exp(- i \sigma_\ell s) g_\ell(t -s) ds,
\end{equation}
or equivalently by
\begin{equation}\label{sol-2}
v_\ell(t) = \dfrac{1}{\exp(2 \pi i \sigma_\ell) -1 } \int_{0}^{2 \pi} \exp( i \sigma_\ell s) g_\ell(t + s) ds,
\end{equation}
for each $\ell \in \N_0$.

\begin{lemma}\label{lt2}
	There are positive constants $C,$ $M,$ and $R$ such that
	\begin{equation*}\label{2222-a}
	|1 - \exp(- 2 \pi i \sigma_{\ell})| \geqslant C \ell^{-M}, \ \ell \geqslant R,
	\end{equation*}
	if there exist positive constants $C', M',$ and $R'$ such that
	\begin{equation*}\label{2222}
	\inf_{\tau \in \Z}  |\tau + \sigma_{\ell}|  \geqslant C' \ell^{-M'}, \ \ell \geqslant R'.
	\end{equation*}
\end{lemma}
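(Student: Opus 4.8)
The plan is to reduce the statement to two elementary one–variable estimates after separating the real and imaginary parts of $\sigma_\ell$. Write $\sigma_\ell = a_\ell + i b_\ell$ with $a_\ell, b_\ell \in \R$. Since $\exp(-2\pi i \sigma_\ell) = e^{2\pi b_\ell}\exp(-2\pi i a_\ell)$, a direct computation (expand the square, then use $1 - \cos 2\theta = 2\sin^2\theta$) gives the identity
\begin{equation*}
|1 - \exp(-2\pi i \sigma_\ell)|^2 = (1 - e^{2\pi b_\ell})^2 + 4\, e^{2\pi b_\ell}\sin^2(\pi a_\ell).
\end{equation*}
On the other hand $\inf_{\tau\in\Z}|\tau + \sigma_\ell|^2 = \mathrm{dist}(a_\ell,\Z)^2 + b_\ell^2$, so the hypothesis is the same as $\mathrm{dist}(a_\ell,\Z)^2 + b_\ell^2 \geq (C')^2 \ell^{-2M'}$ for $\ell \geq R'$. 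Thus it suffices to bound the right-hand side of the displayed identity from below by a fixed positive multiple of $\mathrm{dist}(a_\ell,\Z)^2 + b_\ell^2$, at least for $\ell$ large.

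I would then split according to the size of $b_\ell$. If $|b_\ell| > 1$, then $e^{2\pi b_\ell}$ lies outside the interval $[e^{-2\pi}, e^{2\pi}]$, hence $(1 - e^{2\pi b_\ell})^2 \geq (1 - e^{-2\pi})^2$, and the identity immediately yields $|1 - \exp(-2\pi i\sigma_\ell)| \geq 1 - e^{-2\pi}$, a positive constant that dominates $\ell^{-M}$ for any $M \geq 0$ once $\ell \geq 1$. If $|b_\ell| \leq 1$, then $e^{2\pi b_\ell} \in [e^{-2\pi}, e^{2\pi}]$, and I would invoke two routine inequalities: first, $|\sin \pi a| \geq 2\,\mathrm{dist}(a,\Z)$ for every real $a$ (concavity of $\sin$ on $[0,\pi/2]$), which gives $4\, e^{2\pi b_\ell}\sin^2(\pi a_\ell) \geq 16\, e^{-2\pi}\,\mathrm{dist}(a_\ell,\Z)^2$; second, $(1 - e^{2\pi b})^2 \geq (1 - e^{-2\pi})^2 b^2$ for $|b| \leq 1$ (standard convexity estimates for the exponential), which gives $(1 - e^{2\pi b_\ell})^2 \geq (1-e^{-2\pi})^2 b_\ell^2$. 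Adding these and setting $c_0 = \min\{16\, e^{-2\pi},\, (1-e^{-2\pi})^2\}$, I obtain
\begin{equation*}
|1 - \exp(-2\pi i\sigma_\ell)|^2 \geq c_0\,\big(\mathrm{dist}(a_\ell,\Z)^2 + b_\ell^2\big) \geq c_0\,(C')^2\, \ell^{-2M'}, \qquad \ell \geq R'.
\end{equation*}
Taking square roots and merging the two cases, the lemma follows with $M = M'$, $R = \max\{R',1\}$ and $C = \min\{\sqrt{c_0}\,C',\ 1 - e^{-2\pi}\}$.

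The only point requiring care is that $\sigma_\ell$ need not be real: for a normal operator $Q$ the eigenvalues $\mu_j^k + i\nu_j^k$ are genuinely complex, so the imaginary part $b_\ell$ must be handled. This is exactly what the dichotomy $|b_\ell|\le 1$ versus $|b_\ell|>1$ takes care of — an imaginary part bounded away from $0$ already forces $|1 - \exp(-2\pi i\sigma_\ell)|$ to be bounded below by a positive constant, while a small imaginary part is absorbed, together with $\mathrm{dist}(a_\ell,\Z)$, into the Diophantine-type bound provided by the hypothesis. Beyond this bookkeeping I do not expect any serious obstacle.
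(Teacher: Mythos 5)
Your argument is correct: the identity $|1-\exp(-2\pi i\sigma_\ell)|^2=(1-e^{2\pi b_\ell})^2+4e^{2\pi b_\ell}\sin^2(\pi a_\ell)$, the dichotomy on $|b_\ell|$, and the two elementary bounds $|\sin\pi a|\geqslant 2\,\mathrm{dist}(a,\Z)$ and $|1-e^{2\pi b}|\geqslant(1-e^{-2\pi})|b|$ for $|b|\leqslant 1$ all check out, and they yield the stated estimate with $M=M'$. The paper itself omits the proof, deferring to Proposition 5.7 of the cited reference, and your self-contained computation is exactly the standard argument one expects there.
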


The proof of this result follows the same ideas of the proof given in Proposition 5.7 of \cite{AvGrKi}.

\begin{theorem}\label{general-GH}
	Let $Q$ be strongly diagonalizable operator with matrix symbol of moderate growth and $\Gamma_Q$  finite. Then $L = D_t + Q$ is (GH) if and only if there are constants $C,\theta>0$ and $\ell_0 \in \N_0$ such that
	\begin{equation}\label{nonLiouville-GW}
	\inf_{\tau \in \Z} |\tau + \sigma_\ell| \geqslant C \ell^{-\theta}, \   \ell \geqslant \ell_0.
	\end{equation}
	
	\noindent In other words, $L$ is (GH) if and only if there are $C, \theta>0$ and $\ell_0 \in \N_0$ such that either
	\begin{equation}\label{nonLiouville}
	\inf_{\tau \in \Z} |\tau + \Rea(\sigma_{\ell})| \geqslant C \ell^{-\theta},  \mbox{ or  } \ |\Ima(\sigma_{\ell})|  \geqslant C \ell^{-\theta},
	\end{equation}
	for all $\ell \geqslant \ell_0.$
\end{theorem}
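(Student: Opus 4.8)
The plan is to work entirely on the Fourier side and to reduce everything to the scalar solution operator $g_\ell\mapsto v_\ell$ of \eqref{eq-el1}. By the reduction already carried out, $Lu=f$ with $f\in C^\infty(\T\times M)$ is equivalent to the diagonal systems \eqref{general-syst-v}, and by \eqref{v=u} (together with its analogue relating $\widehat G_j$ to $\widehat F_j$) the block norms of $\widehat V_j,\widehat G_j$ differ from those of $\widehat U_j,\widehat F_j$ only by the polynomial-in-$j$ factors $\|S_j\|,\|S_j^{-1}\|$; by Weyl's law the reordered index $\ell$ in \eqref{mu-nu-ell} is polynomially comparable to $j$. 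Hence, invoking Propositions \ref{prop-smooth-1} and \ref{prop-smooth-2}, the statement ``$u\in C^\infty(\T\times M)$'' is equivalent to ``the $v_\ell(t)$ decay faster than any power of $\ell$, together with all their $t$-derivatives'', and similarly for $f$ and the $g_\ell$; membership in $\D(\T\times M)$ corresponds to at most polynomial growth. Since $\Gamma_Q$ is finite, for $\ell$ large $\sigma_\ell\notin\Z$ and \eqref{eq-el1} has the unique solution \eqref{sol-1}--\eqref{sol-2}, so the whole theorem becomes a statement about that solution operator, uniformly in $\ell$.

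For sufficiency, assume \eqref{nonLiouville-GW}. Given $u\in\D(\T\times M)$ with $Lu=f\in C^\infty(\T\times M)$, I would estimate $v_\ell$ using \eqref{sol-1} when $\Ima\sigma_\ell\leqslant 0$ and \eqref{sol-2} when $\Ima\sigma_\ell>0$; in each case the integrand has modulus $\leqslant 1$ on $[0,2\pi]$, and since $\partial_t$ commutes with the (constant-coefficient) solution operator, $\max_t|\partial_t^\alpha v_\ell(t)|\leqslant 2\pi\,|1-e^{-2\pi i\sigma_\ell}|^{-1}\max_t|\partial_t^\alpha g_\ell(t)|$, respectively with $|e^{2\pi i\sigma_\ell}-1|^{-1}$. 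Now Lemma \ref{lt2} applied to $\sigma_\ell$ and to $\overline{\sigma_\ell}$ (note $|e^{2\pi i\sigma_\ell}-1|=|1-e^{-2\pi i\overline{\sigma_\ell}}|$ and $\inf_\tau|\tau+\overline{\sigma_\ell}|=\inf_\tau|\tau+\sigma_\ell|$) turns \eqref{nonLiouville-GW} into $|1-e^{-2\pi i\sigma_\ell}|^{-1},\ |e^{2\pi i\sigma_\ell}-1|^{-1}\leqslant C\ell^{M}$ for $\ell$ large. As $g$ is the image of the smooth $f$ through the $S_j$, its coefficients decay faster than any power of $j$, hence of $\ell$, with all $t$-derivatives; multiplying by $C\ell^{M}$ preserves this, so the $v_\ell$, and then the $\widehat U_j=-iS_j^{-1}\widehat V_j$, decay rapidly, whence $u\in C^\infty(\T\times M)$.

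For necessity, if \eqref{nonLiouville-GW} fails, pick $\ell_1<\ell_2<\cdots\to\infty$ with $\inf_{\tau\in\Z}|\tau+\sigma_{\ell_j}|<\ell_j^{-j}$, and let $\tau_j\in\Z$ be a nearest integer to $-\Rea\sigma_{\ell_j}$, so that $|\tau_j+\sigma_{\ell_j}|=\inf_\tau|\tau+\sigma_{\ell_j}|<\ell_j^{-j}$ and, by moderate growth of $\sigma_Q$, $|\tau_j|\leqslant C\ell_j^{N}$. Define $\widehat V$ to equal $e^{i\tau_j t}$ in the $\ell_j$-slot of \eqref{mu-nu-ell} and $0$ in every other slot: this is a distribution (block norms bounded, $t$-derivatives growing only polynomially since $|\tau_j|$ does) but not smooth (block norms equal $1$ infinitely often). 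Applying the diagonal operator of \eqref{general-syst-v} gives $\widehat G$ with $\ell_j$-entry $i(\tau_j+\sigma_{\ell_j})e^{i\tau_j t}$, so $\max_t|\partial_t^\alpha(\widehat G)_{\ell_j}(t)|=|\tau_j+\sigma_{\ell_j}|\,|\tau_j|^\alpha<\ell_j^{-j+N\alpha}$, faster than any power of $\ell_j$ for each fixed $\alpha$. Transporting $\widehat V,\widehat G$ back through $-iS_j^{-1}$ produces $u\in\D(\T\times M)$ and $f\in C^\infty(\T\times M)$ with $Lu=f$; since \eqref{v=u} is two-sided, the polynomial loss does not destroy the failure of rapid decay, so $u\notin C^\infty(\T\times M)$ and $L$ is not (GH). The equivalence of \eqref{nonLiouville-GW} with \eqref{nonLiouville} is the elementary identity $\inf_{\tau}|\tau+\sigma_\ell|^{2}=\mathrm{dist}(\Rea\sigma_\ell,\Z)^{2}+(\Ima\sigma_\ell)^{2}$, so exactly one of the two terms is $\gtrsim\ell^{-\theta}$ when the sum is.

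The analytic core is light: it is essentially Lemma \ref{lt2} plus the two representation formulas \eqref{sol-1}--\eqref{sol-2}. The main obstacle is bookkeeping: transferring ``rapid decay / polynomial growth'' faithfully among the three descriptions of the data (the scalar slots $v_\ell,g_\ell$, the matrix blocks $\widehat U_j,\widehat F_j$, and the functions $u,f$ on $\T\times M$), while keeping track of the polynomial weights $\|S_j\|,\|S_j^{-1}\|$, of the multiplicities $d_j$, and of the polynomial comparison between $\ell$ and $j$. The one genuinely delicate analytic point is that when $|\Ima\sigma_\ell|$ is large the integrand in one of \eqref{sol-1}, \eqref{sol-2} blows up exactly when the corresponding denominator does; choosing the formula according to the sign of $\Ima\sigma_\ell$ is what makes the estimate uniform in $\ell$.
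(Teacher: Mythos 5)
Your proposal is correct and follows essentially the same route as the paper: the same reduction to the diagonal scalar equations via \eqref{general-syst-v} and \eqref{v=u}, the same choice between \eqref{sol-1} and \eqref{sol-2} according to the sign of $\Ima\sigma_\ell$ combined with Lemma \ref{lt2} for sufficiency, and the same singular-frequency construction $v_{\ell_j}(t)=e^{i\tau_j t}$, $g_{\ell_j}(t)=i(\tau_j+\sigma_{\ell_j})e^{i\tau_j t}$ for necessity. You even make explicit two points the paper leaves implicit (the conjugation argument giving the lower bound for $|e^{2\pi i\sigma_\ell}-1|$, and the polynomial comparability of the reordered index $\ell$ with the block index $j$), so the writeup is complete.
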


\begin{proof} Firstly, assume that \eqref{nonLiouville-GW} holds true and let us prove that $L$ is (GH). Since $Q$ is strongly diagonalizable, by \eqref{general-syst-v} and \eqref{v=u}, it is enough to show that: if $v \in \D(\T\times M)$ and $Lv \in C^\infty(\T\times M)$,  then  $v \in C^\infty(\T\times M)$.
	
	We may assume $\sigma_\ell \notin \Z$, for all $\ell \in \N$, therefore the solution of the equations \eqref{eq-el1} are given by the
	equivalent expressions  \eqref{sol-1} or \eqref{sol-2}. The choice of each one of this expressions depends on the sign of $\Ima(\sigma_\ell)$.
	
	More precisely, if $\Ima(\sigma_\ell)<0$ then by \eqref{2-prop-smooth-2} and \eqref{sol-1} we have
	\begin{eqnarray*}
		|\partial^{\alpha}_t v_\ell(t)| &\leqslant& |1 - \exp(- 2 \pi i \sigma_\ell)|^{-1} \int_{0}^{2 \pi} \exp( \Ima(\sigma_\ell) s) \left|\partial^{\alpha}_t g_\ell(t -s) \right|ds \\
		&\leqslant& |1 - \exp(- 2 \pi i \sigma_\ell)|^{-1} \ 2\pi \ \max_{t\in \T}\|\partial_t^{\alpha} g_\ell(t)\|_{\C^{d_j}}.
	\end{eqnarray*}
	
	By Lemma \ref{lt2}, there are positive constants $C,$ $M,$ and $R$ such that $$|1 - \exp(- 2 \pi i \sigma_{\ell})|^{-1} \leqslant C \ell^{M}, \ \ell \geqslant R,$$
	
	Since $g$ is a smooth function, given $N\in \N$ and $\alpha \in \N_0$ we obtain  positive constants $C'$ and $R'>0$ such that
	\begin{equation*}
	\max_{t\in \T}\|\partial_t^{\alpha} g_\ell(t)\|_{\C^{d_j}} \leqslant C' \ell^{-(N+M)},  \mbox{ for all }  \ell \geqslant R'.
	\end{equation*}
	
	Therefore $|\partial^{\alpha}_t v_{\ell}(t)| \leqslant C'' \ell^{-N}$, when $\ell  \geqslant R''$ and  $\Ima(\sigma_\ell)<0$.
	Analogously, when $\Ima(\sigma_\ell)>0$,  we use \eqref{sol-2} and obtain the same type of estimate.  Thus $\{v_\ell(t)\}$ satisfy the estimate \eqref{2-prop-smooth-2}, for any $\alpha \in \N_0$ and natural $N$. It follows that $v \in C^\infty (\T\times M)$ and $L$ is (GH).

	Conversely, assume that the estimate in \eqref{nonLiouville-GW} fails. Then, there is a subsequence $\{\sigma_{{\ell}_n}\}$ and a sequence $\{\tau_n\} \subset \Z$ such that $$ |\sigma_{{\ell}_n} - \tau_n | < {{\ell}_n}^{-n/2}, \ n \to \infty.$$

	Since $Q$ has moderated growth, there is $\beta>0$ such that $|\sigma_{\ell}| = \mathcal{O}(\ell^{\beta})$. It follows from the last estimate that
	\begin{equation}\label{non-Dioph-2}
	|\tau_n | = \mathcal{O}({{\ell}_n}^{-n/2 + \beta}),  \  n \to \infty.
	\end{equation}

	Next, define sequences of functions $\{v_\ell(t)\}$ and $\{g_\ell(t)\}$, with ${\ell \in \N_0}$, by
	\begin{eqnarray*}
		v_\ell(t)  =  \left\{
		\begin{array}{l}
			e^{-i\tau_n t}, \textrm{ if } \ell = \ell_n, \\[2mm]
			0, \textrm{ otherwise. }
		\end{array} \right.
		& \mbox{ and } &
		g_\ell(t)  =  \left\{
		\begin{array}{l}
			(\sigma_{{\ell}_n} - \tau_n)e^{-i\tau_n t}, \textrm{ if } \ell = \ell_n, \\[2mm]
			0, \textrm{ otherwise. }
		\end{array} \right.
	\end{eqnarray*}
	
	Note that $|v_{\ell_n}(t)| \equiv 1$, and by (\ref{non-Dioph-2}), $|\partial^{\alpha}_t v_{\ell_n}(t)| =  |\tau_n|^{\alpha} \leqslant C  {\ell_n}^{-n/2+ \beta},$  when $n \to \infty,$ therefore
	$$ v = \sum_{\ell \in \N} v_\ell(t)e_{\ell}(x) =  \sum_{n \in \N}e^{-i\tau_n t}e_{\ell_n}(x) \in \D(\T \times M) \setminus C^{\infty}(\T \times M).$$
	
	However, for any  $\alpha \in \N_0$, we have
	$$
	|\partial^{\alpha}_t g_{\ell_n}(t)| \leqslant  |\tau_n|^{\alpha} |\sigma_{{\ell}_n} - \tau_n| \leqslant C  {{\ell}_n}^{-n/2}
	{{\ell}_n}^{-n/2 + \beta} \leqslant C  {{\ell}_n}^{-n + \beta},
	$$
	when $n \to \infty,$ therefore
	\begin{equation*}
	g(t) = \sum_{\ell \in \N} g_\ell(t)e_{\ell}(x) =  \sum_{n \in \N}(\sigma_{{\ell}_n} - \tau_n)e^{-i\tau_n t}e_{\ell_n}(x) \in C^{\infty}(\T \times M)
	\end{equation*}
	and $L$ is not (GH).
	
\end{proof}
\begin{remark}
	From the last theorem, there are only two types of (GH) operators of the form $L=D_t + Q $, namely:
	\begin{itemize}\label{typeI+II}
		\item[] {Type\! I.} there are $C,\theta>0$ and $\ell_0 \in \N_0$ such that
		$$\displaystyle\inf_{\tau \in \Z} |\tau + Re(\sigma_{\ell})| \geqslant C \ell^{-\theta}, \ \ell \geqslant \ell_0; $$
		
		\item[] {Type\! II.} there are $C,\theta>0$ and $\ell_0 \in \N_0$ such that
		$$ |Im(\sigma_{\ell})|  \geqslant C \ell^{-\theta},  \ \ell \geqslant \ell_0. $$
	\end{itemize}
	
	This characterization is in line with results of J. Hounie (see \cite{Hou79} Section 2) and A. Bergamasco (see \cite{BERG94} Section 3).
	
\end{remark}

\section{Perturbation of vector fields by low order terms \label{section4}}

There are three seminal articles of S. Greenfield and N. Wallach (see \cite{GW1},\cite{GW3}, and \cite{GW2}) that aroused the interest of geometers and analysts by the study of global hypoellipticity on closed manifolds, establishing a relation between this property and the behavior of the spectrum at infinity. In the particular case of the torus, it was observed that an obstruction of number-theoretical nature appears as a necessary condition for the global hypoellipticity

More precisely, consider the vector field
\begin{equation*}
L = D_t + \omega D_x, \ \omega=\alpha+i\beta \in \C,
\end{equation*}
where $(t, x) \in \T^2$ and $D_x = - i \partial/\partial x$. In \cite{GW1}, the authors showed that $L$ is (GH) on $\T^2$  if and only if either $\beta\neq 0$ or $\alpha$ is an irrational non-Liouville number.

We recall that $\alpha$ is a Liouville number if it can be approximated by rationals to any order, that is, for every positive integer $N$, there is $C > 0$, and infinitely many integer pairs $(p, q)$ so that: $|\alpha-p/q|<C/q^{N}.$

In this session, we show how to recover this result from the viewpoint of $E-$invariant operators and how Theorem \ref{general-GH} can be used to study perturbations by lower order operators. In particular, we recovered results of Bergamasco of \cite{BERG94} and extended them to a broader class of perturbations.

\subsection{Constant Vector fields and zero-order perturbations on $\T^2$} \ \medskip 

We start by observing that $Q=\omega D_x$ commutes with $E = -D_x^2$ on $\T_x^1$. Since the spectrum is  $spec(E) = \{j^2; j \in \N_0 \}$; the eigenfunctions may be chosen as $ e_j^1(x)=e^{-ijx}$ and  $e_j^2(x)=e^{ijx}$; and the eigenspaces are $E_0=\C$ and $E_{j^2} = span \{e^{-ijx}, e^{ijx}\}$, for $j\in \N$. Obviously $d_0=1$ and $d_j = 2$ for all $ j \geqslant 1$.

For the sake of simplicity, we will use any of the three expressions below for the Fourier expansion of a distribution $u \in \D(\T)$
$$
u \ = \ \widehat{u}_0^{\, 1} + \sum_{j \in \N} \sum_{k=1}^{2} \widehat{u}_j^{\, k} e_j^k(x) \ = \ \sum_{j \in \N_0} \sum_{k=1}^{2} \widehat{u}_j^{\, k} e_j^{\, k}(x) \ = \ \sum_{j\in \N_0} \left\langle \widehat{u}_{j},\overline{ e_{j}}(x)  \right\rangle_{\C^2},
$$
where we set $\widehat{u}_0^{\, 2}  \doteq 0$, and denote $\widehat{u}_{j} = \left ( \widehat{u}_{j}^{\, 1}, \widehat{u}_{j}^{\, 2} \right)$, and
$e_{j}(x) = \left ( e_{j}^{\, 1}(x), e_{j}^{\, 2}(x) \right)$, for any $j \in \N_0$.

In particular, the restriction of $D_x$ to each eigenspace $E_{j^2},$ $j\in\N,$ can be represented by ${\mathcal{D}}_{j} = \mbox{diag}(-{j} \, , \, {j})$; thus the equation $(D_t+\omega D_x) u = f$ is equivalent to the system
\begin{equation*}
D_t \widehat{U}_{j}(t) + \omega {\mathcal{D}}_{j}\widehat{U}_{j}(t)  = \widehat{F}_{j}(t), \ {j} \in \N,
\end{equation*}
where $t\in \T$, and $\omega=\alpha+i\beta \in \C.$

It follows by Theorem \ref{general-GH} that $L=D_t+\omega D_x$ is globally hypoelliptic if, and only if, there are constants $C,\theta>0$ and $j_0 \in \N_0$ such that
\begin{equation*}
\inf_{\tau \in \Z} |(\tau + j \alpha) +ij\beta| \geqslant C j^{-\theta}, \mbox{ if }  j \geqslant j_0,
\end{equation*}
which is equivalent to say that either $\beta\neq 0$ or $\alpha$ is an irrational non Liouville number.

\begin{remark}\label{cte-pert}
	A more careful reading of this result allows us to extract information about the global hypoellipticity of the perturbed operator  $L_{\epsilon} = D_t + \omega D_x +  \epsilon,$ with $\epsilon  \in \C.$ Indeed,
	\begin{enumerate}
		\item[$i.$] when $\beta \neq 0$, the eigenvalues $\sigma_{j}^k$ of the matrices $\omega\mathcal{D}_j + \epsilon$ satisfy
		$\Ima(\sigma_j^1) = \Ima(\epsilon - j\omega) \ \textrm{ and } \ \Ima(\sigma_j^2) = \Ima(\epsilon + j\omega).$ Therefore, $L_{\epsilon}$ is (GH), for all $\epsilon \in \C$. 
		
		\item[$ii.$] when $\beta=0$, $L_{\epsilon}$ is (GH) if and only if there is $\theta >0$ and $\ell_0 \in \N$ such that
		\begin{equation*}
		\inf_{\tau  \in \Z}  \big|\tau  \pm \alpha \ell  + \epsilon \big| \geqslant C\ell^{-\theta}, \ \ell \geqslant \ell_0.
		\end{equation*}	
	\end{enumerate}
\end{remark}

Finally, by using Propositions 3.1 and 3.2 of \cite{BERG94}, we obtain the following proposition:

\begin{proposition}
	Given $\omega=\alpha+i\beta\in \C$ and $\epsilon=\epsilon(t,x)\in C^\infty(\T^2)$, consider the operator
	$$
	L_{\epsilon} = D_t + \omega D_x +  \epsilon(t,x).
	$$
	Setting ${L}_{\epsilon_0} = D_t + \omega D_x +  \epsilon_0$,  where
	$$
	\epsilon_0 = \frac{1}{(2\pi)^{2}}\int_{0}^{2\pi}\!\! \int_{0}^{2\pi} \epsilon(t,x) dtdx,
	$$
	we have
	\begin{enumerate}
		\item[$i.$] if $\beta \neq 0$ then $L_{\epsilon}$ is (GH);
		\item[$ii.$] if  $\beta=0$ and $\alpha$ is an irrational non Liouville number then the following statements are equivalent:
		\begin{enumerate}
			\item[a.] $L_{\epsilon}$ is (GH);
			\item[b.] ${L}_{\epsilon_0}$ is (GH);
			\item[c.] there is $\theta >0$ and $\ell_0 \in \N$ such that
			\begin{equation}
			\inf \Big\{ \big|\tau  \pm \alpha \ell  + \epsilon_0 \big|; \tau  \in \Z \Big\} \geqslant C\ell^{-\theta}, \forall \ell \geqslant \ell_0.
			\end{equation}
		\end{enumerate}
	\end{enumerate}
\end{proposition}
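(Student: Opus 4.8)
The plan is to reduce the statement, via $x$-Fourier expansion, to a family of ODEs on $\T$ indexed by the frequency and to analyze them with the tools already in place, in particular Theorem \ref{general-GH} and Remark \ref{cte-pert}. Item $i.$ (the case $\beta\neq0$) is immediate from Remark \ref{cte-pert}$i.$ once one absorbs the lower-order smooth term: writing $L_\epsilon = D_t+\omega D_x + \epsilon(t,x)$ and noting that the imaginary parts of the eigenvalues of the relevant symbol are still $\pm j\beta + O(1)$ (the smooth bounded perturbation cannot change the $|{\rm Im}|\gtrsim j$ behaviour), Theorem \ref{general-GH} of Type II applies and $L_\epsilon$ is (GH) for every $\epsilon\in C^\infty(\T^2)$. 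The real content is item $ii.$, for which I would invoke Propositions 3.1 and 3.2 of \cite{BERG94} exactly as the authors announce: these furnish, under the hypothesis that $\alpha$ is irrational non-Liouville, a change of variables / conjugation $u\mapsto e^{\psi}u$ (with $\psi\in C^\infty(\T^2)$ solving $D_t\psi+\alpha D_x\psi = \epsilon_0-\epsilon$) that turns $L_\epsilon$ into $L_{\epsilon_0}$ modulo a smoothing/invertible operator. The non-Liouville condition is precisely what makes that cohomological equation solvable in $C^\infty$, so this is where the Diophantine hypothesis on $\alpha$ is consumed.

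Granting that reduction, the scheme is: (a) show (a)$\Leftrightarrow$(b) by the conjugation above — since $u\mapsto e^{\psi}u$ is an automorphism of both $\D(\T^2)$ and $C^\infty(\T^2)$, global hypoellipticity is preserved under it, hence $L_\epsilon$ is (GH) iff $L_{\epsilon_0}$ is; (b) show (b)$\Leftrightarrow$(c) by applying Theorem \ref{general-GH} to the constant-coefficient operator $L_{\epsilon_0}=D_t+\omega D_x+\epsilon_0$. For the latter, $Q=\omega D_x+\epsilon_0$ is $E$-invariant with $E=-D_x^2$, its restriction to $E_{j^2}$ is the diagonal matrix ${\rm diag}(-j\omega+\epsilon_0,\ j\omega+\epsilon_0)$, so in the reordered notation of \eqref{mu-nu-ell} the eigenvalue sequence is, up to finitely many terms and relabelling, $\{\pm\alpha\ell+{\rm Re}(\epsilon_0) + i({\rm Im}(\epsilon_0)\pm\ell\beta)\}$; with $\beta=0$ this is $\{\pm\alpha\ell+\epsilon_0\}$ (here $\epsilon_0\in\R$ in the $\beta=0$, but in general one keeps ${\rm Im}(\epsilon_0)$ bounded and the Type II alternative is unavailable), and condition \eqref{nonLiouville-GW} becomes exactly the displayed inequality $\inf_{\tau\in\Z}|\tau\pm\alpha\ell+\epsilon_0|\geqslant C\ell^{-\theta}$. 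One should also check $\Gamma_Q$ is finite under (c) — but that is automatic, since (c) itself forces $\tau\pm\alpha\ell+\epsilon_0\neq 0$ for large $\ell$, i.e. $\sigma_\ell\notin\Z$ eventually — and note $Q$ is normal hence strongly diagonalizable by Example \ref{example-normal-case}, so Theorem \ref{general-GH} genuinely applies.

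The main obstacle is the first equivalence, (a)$\Leftrightarrow$(b): all the subtlety of \cite{BERG94} lives there, because it requires solving $D_t\psi+\alpha D_x\psi=\epsilon_0-\epsilon$ smoothly on $\T^2$, which is possible precisely because $\epsilon_0$ is the zeroth Fourier coefficient (so the right-hand side has mean zero) and $\alpha$ is non-Liouville (so the small-divisor series $\sum \widehat{(\epsilon_0-\epsilon)}_{(\tau,\ell)}(\tau+\alpha\ell)^{-1}e^{i(\tau t+\ell x)}$ converges in $C^\infty$). I would not reprove this; I would cite Propositions 3.1 and 3.2 of \cite{BERG94} and spell out only how the conjugation interacts with (GH). The rest — the passage from $L_{\epsilon_0}$ to the spectral condition (c) — is a direct computation of the matrix symbol on the eigenspaces of $-D_x^2$ followed by a citation of Theorem \ref{general-GH}, and is routine. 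A minor point to state carefully is that in item $ii.$ one has $\beta=0$ but $\epsilon_0$ may still be complex; the Type II alternative ${|\rm Im}(\sigma_\ell)|\gtrsim\ell^{-\theta}$ then reads $|{\rm Im}(\epsilon_0)|\gtrsim\ell^{-\theta}$, i.e. it holds iff ${\rm Im}(\epsilon_0)\neq0$, which is subsumed in the inequality of (c) after separating real and imaginary parts — so no generality is lost by writing (c) as a single estimate.
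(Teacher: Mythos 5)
Your proposal is correct and takes essentially the same route as the paper, which itself offers no independent argument and simply invokes Propositions~3.1 and~3.2 of \cite{BERG94}: conjugate $L_\epsilon$ to $L_{\epsilon_0}$ by $e^\psi$ (solving the cohomological equation, which uses that $\alpha$ is non-Liouville and that $\epsilon-\epsilon_0$ has mean zero), then apply Theorem~\ref{general-GH} to the constant-coefficient, $E$-invariant operator $L_{\epsilon_0}$, whose eigenvalue sequence is exactly $\pm\alpha\ell+\epsilon_0$. One small caveat on item~$i$.: as written you apply Theorem~\ref{general-GH} directly to $L_\epsilon$, but the multiplication operator $\epsilon(t,x)$ is generally not $E$-invariant (it does not commute with $-D_x^2$), so that theorem does not literally apply to $L_\epsilon$; the clean argument for $\beta\neq0$ is either to note that $D_t+\omega D_x+\epsilon(t,x)$ is elliptic on $\T^2$ (its principal symbol $\tau+\omega\xi$ vanishes only at the origin when $\Ima\omega\neq0$), hence (GH), or to run the same conjugation reduction as in item~$ii$. (which needs no Diophantine condition when $\beta\neq0$) and then cite Remark~\ref{cte-pert}$\,i$.
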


In the next subsection we study perturbations of  $L= D_t + \omega D_x$ by $E-$invariant operators.

\subsection{Perturbations invariant with respect to $E$}  \ \medskip

Let us start by considering a class of $E-$invariant operators $\D(\T)$ through  its matrix symbol.

Let $\{\mathcal{R}_j; j \in \N_0\}$ be a sequence of matrices given by
\begin{equation*}\label{seq-self-adj-matrix}
{\mathcal{R}}_0 \in \C,
\ \textrm{ and } \
{\mathcal{R}}_j = \left [
\begin{array}{cc}
r_{j}^{1}   & \tau_j^{2} \\[2mm]
\tau_j^{1}  & r_{j}^{2}
\end{array} \right] \in \C^{2\times 2},  \ j \in \N,
\end{equation*}
satisfying  the moderate growth condition
\begin{equation}\label{delta-increase}
\|{\mathcal{R}}_j\| = \mathcal{O}(j^{\, \delta}), \mbox{ as } j \to \infty,
\end{equation}
for some $\delta \in \mathbb{R}$.

For each $u \in \D(\T)$ we define
\begin{equation}\label{op-A}
{\mathcal{R}} u \doteq \sum_{j\in \N_0} \left\langle \widehat{u}_{j}, {\mathcal{R}}_j e_j(x)  \right\rangle_{\C^2}.
\end{equation}

\begin{proposition}\label{R-cont}
	The operator ${\mathcal{R}}: \H^{s}(\T) \to \H^{s - \delta}(\T)$, defined in \eqref{op-A}, is linear and continuous, for all $s \in \Z$.
\end{proposition}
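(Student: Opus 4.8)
The plan is to push the whole statement to the Fourier side, where it becomes a short consequence of the characterization \eqref{Sobolev1matr} of Sobolev spaces together with the moderate growth hypothesis \eqref{delta-increase}. Linearity of $\mathcal{R}$ is immediate from \eqref{op-A}, since $u\mapsto\widehat{u}_j$ is linear and each $\mathcal{R}_j$ acts linearly on $\C^{d_j}$; so only the norm estimate requires an argument.

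First I would make explicit how $\mathcal{R}$ acts on Fourier coefficients: reading off \eqref{op-A}, one has $\widehat{(\mathcal{R}u)}_j=\widetilde{\mathcal{R}}_j\,\widehat{u}_j$, where $\widetilde{\mathcal{R}}_j$ is the matrix $\mathcal{R}_j$ up to transposition and entrywise conjugation (the exact form being irrelevant here). Since transposing and conjugating entries do not change the operator norm of a matrix, and since $d_j\le 2$ on $M=\T^1_x$, all matrix norms in play --- the operator norm and the max-entry norm $\|\mathcal{R}_j\|$ used in \eqref{moderate-symbol} --- are comparable up to a universal constant; hence there is $c>0$ with
\[
\|\widehat{(\mathcal{R}u)}_j\|_{\C^{d_j}}\ \le\ c\,\|\mathcal{R}_j\|\,\|\widehat{u}_j\|_{\C^{d_j}},\qquad j\in\N_0 .
\]
By \eqref{delta-increase} there are $C_0>0$ and $j_0\in\N$ such that $\|\mathcal{R}_j\|\le C_0\,j^{\delta}$ for $j\ge j_0$; enlarging $C_0$ I may assume this for every $j\ge 1$, leaving only the single term $j=0$, which contributes the finite quantity $|\mathcal{R}_0|^2\,|\widehat{u}_0|^2$, to be carried along by hand.

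Next I would insert these bounds into \eqref{Sobolev1matr} with $n=1$. For $u\in\H^s(\T)$,
\begin{align*}
\sum_{j\in\N_0}\|\widehat{(\mathcal{R}u)}_j\|^2_{\C^{d_j}}\, j^{2(s-\delta)}
&\le\ (c\,|\mathcal{R}_0|)^2\,|\widehat{u}_0|^2 + (c\,C_0)^2\sum_{j\ge 1}\|\widehat{u}_j\|^2_{\C^{d_j}}\, j^{2\delta}\, j^{2(s-\delta)}\\
&=\ (c\,|\mathcal{R}_0|)^2\,|\widehat{u}_0|^2 + (c\,C_0)^2\sum_{j\ge 1}\|\widehat{u}_j\|^2_{\C^{d_j}}\, j^{2s},
\end{align*}
and the right-hand side is at most $C_1\sum_{j\in\N_0}\|\widehat{u}_j\|^2_{\C^{d_j}}\,j^{2s}<+\infty$ with $C_1=\max\{(c|\mathcal{R}_0|)^2,(cC_0)^2\}$. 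By \eqref{Sobolev1matr} this simultaneously shows that the series \eqref{op-A} defines an element of $\D(\T)$ (so $\mathcal{R}$ is well defined), that $\mathcal{R}u\in\H^{s-\delta}(\T)$, and --- fixing the equivalent Hilbertian norms furnished by \eqref{Sobolev1matr} --- that $\|\mathcal{R}u\|_{\H^{s-\delta}(\T)}\le C\,\|u\|_{\H^s(\T)}$ for a constant $C=C(s,\delta,\mathcal{R})$. That is the asserted continuity.

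There is no genuine obstacle; the only points deserving a sentence of care are (i) passing between the max-entry norm of \eqref{moderate-symbol} and the operator norm, which is harmless because $d_j\le 2$; (ii) isolating the index $j=0$ so that the powers $j^{2s}$ and $j^{2\delta}$ are meaningful; and (iii) the bookkeeping of constants when merging the ranges $j<j_0$ and $j\ge j_0$. I would also note that $s\in\Z$ is not used anywhere: since \eqref{Sobolev1matr} is valid for all $s\in\R$, the proposition holds verbatim for every real $s$.
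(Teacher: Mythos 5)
Your argument is correct and is essentially the same as the paper's: both push the problem to the Fourier side, invoke the Sobolev characterization \eqref{Sobolev1matr}, and close with the moderate growth bound \eqref{delta-increase}. The only cosmetic difference is bookkeeping — the paper splits $\mathcal{R}=\mathcal{R}_1+\mathcal{R}_2$ into the diagonal and anti-diagonal parts of $\mathcal{R}_j$ and estimates each separately, whereas you keep the $2\times2$ matrix intact and use the comparability of the operator norm with the max-entry norm in bounded dimension; the two routes give the same estimate with the same constants up to a factor of $2$, and your closing remark that $s\in\Z$ is nowhere used (so the statement holds for all real $s$) is accurate.
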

\begin{proof} We start by writing $\mathcal{R} = \mathcal{R}_1 + \mathcal{R}_2$ where
	$$\mathcal{R}_1 u \doteq \sum_{j\in \N}  \sum_{k=1}^2 \widehat{u}_j^{k}  r_j^k  e_{j}^{k}(x) \ \mbox{ and } \ \mathcal{R}_2  u \doteq \sum_{j\in \N}\sum_{k=1}^2 \widehat{u}^{\, 3-k}_j \tau_j^k  e_{j}^{k}(x),$$
	and then we prove that $\mathcal{R}_1$  and $\mathcal{R}_2$ are continuous from $\H^{s}(\T)$ to $\H^{s - \delta}(\T)$.
	
	If $u \in \H^s(\T)$ then by \eqref{Sobolev1matr} and \eqref{delta-increase} we have
	\begin{eqnarray*}
		\|\mathcal{R}_1 u\|^2_{\H^{s - \delta}} & = & \sum_{j \in \N} \sum_{k=1}^{2} |\widehat{u}_j^k r_j^k |^2  j^{2(s - \delta)} \\
		& \leqslant & C \sum_{j \in \N} \sum_{k=1}^{2}  |\widehat{u}_j^k|^2 j^{2(s - \delta)+2\delta}\! =\! C\|u\|^2_{\H^{s}}
	\end{eqnarray*}
	and
	\begin{eqnarray*}
		\|\mathcal{R}_2 u\|^2_{\H^{s - \delta}} & = & \sum_{j \in \N} \sum_{k=1}^{2} | \widehat{u}^{\, 3-k}_j \tau_j^k|^2  j^{2(s - \delta)}\\
		& \leqslant & C \sum_{j \in \N} \sum_{k=1}^{2}  |\widehat{u}_j^{\, 3-k}|^2 j^{2(s - \delta)+2\delta}\!\!=C\|u\|^2_{\H^{s}}.
	\end{eqnarray*}
	
	This concludes the proof of the continuity.
	
\end{proof}

For the purposes of this section, it is sufficient to consider the case where
\begin{equation*}\label{matrix-example}
{\mathcal{R}}_0 = 0,  \
{\mathcal{R}}_j = \left [
\begin{array}{cc}
r_{j}   & \gamma_j \\
\gamma_j  & r_{j}
\end{array}
\right] \in \R^{2\times 2}, \ j \in \N,
\end{equation*}
and
\begin{equation*}
\|{\mathcal{R}}_j\| = \mathcal{O}(j^{\delta}), \ j \to \infty, \textrm{ for some } 0 \leqslant \delta < 1.
\end{equation*}

\smallskip
If $u \in \D(\T^2)$ is a solution of equation  $(D_t+\omega D_x + \mathcal{R}) u = f$, with $f \in C^\infty(\T^2)$,  then the $x-$Fourier coefficients of $u$ satisfy the system
\begin{equation*}\label{sys-pert-1}
D_t \widehat{U}_j(t) + ( \omega {\mathcal{D}}_j + {\mathcal{R}}_j)\widehat{U}_j(t)  = \widehat{F}_j(t), \ j \in \N.
\end{equation*}

Observe that the operator $(\omega D_x + \mathcal{R})$ is strongly diagonalizable, since each matrix $(\omega{\mathcal{D}}_j + {\mathcal{R}}_j)$ is symmetric, for $j\in\N$. In particular,
\begin{equation*}\label{characteristic-polynomial}
p_j(\xi) = \xi^2 - 2r_j\xi + (r_j^2 - \omega^2j^2) - \gamma_j^2.
\end{equation*}
is the characteristic  polynomial of $\omega {\mathcal{D}}_j + {\mathcal{R}}_j$, for each $j.$

We say that a perturbation $\mathcal{R}$ is commutative when
$$[D_x, \mathcal{R}]=0,$$
otherwise, we say that $\mathcal{R}$ is a commutative perturbation.

The examples we are considering satisfy
\begin{equation*}\label{bracket-R}
[{\mathcal{D}}_j , {\mathcal{R}}_j] = 2 j \gamma_j
\left [
\begin{array}{cr}
0   & -1   \\
1   &  0
\end{array}
\right],
\end{equation*}
thus
\begin{equation*}\label{commutative-case}
[D_x , \mathcal{R}] = 0  \  \Leftrightarrow  \  \gamma_j =0, \ \forall j \in \N_0. 	
\end{equation*}

Therefore, if $\mathcal{R}$ is a commutative perturbation, we have
\begin{equation*}\label{eigen=sum}
\omega {\mathcal{D}}_j + {\mathcal{R}}_j =
\left [
\begin{array}{cr}
-  \omega j + r_{j}   & 0 \\
0                      &    \omega j + r_{j}
\end{array}
\right], \ \forall j \in \N,
\end{equation*}
and it is possible to use  Theorem \ref{general-GH} directly to study  the global hypoellipticity of $(D_t+\omega D_x + \mathcal{R})$, as has been done in Remark \ref{cte-pert}.

However, in the case where $\mathcal{R}$ is a non-commutative perturbation, the situation requires particular attention, since we have to ensure that the growth of the sequence $\{S_j\}$ that diagonalizes $(\omega {\mathcal{D}}_j + \mathcal{R}_j)$ is at most polynomial.

The next example presents a non-trivial situation in which this control is done with details.

\begin{example}\label{n-comm-example1}
	Consider the operator $D_t+ \omega D_x + \mathcal{R}$ on $\T^2$, with $\omega\doteq \alpha + i\beta \in  \C$, and $\mathcal{R}$ defined by the sequence of matrices
	\begin{equation*}
	{\mathcal{R}}_0 = 0, \ \textrm{ and } \
	{\mathcal{R}}_j = \left [
	\begin{array}{cc}
	0   & \gamma_j \\
	\gamma_j  & 0
	\end{array}
	\right], \ j \in \N,
	\end{equation*}
	where $\gamma_j \in \R$ and $|\gamma_j| \sim j^{\delta}$, as $j \to \infty$, for some $0 \leqslant \delta < 1$.
	
	When $\beta=0$ and $\alpha\neq 0$, the eigenvalues are given by $\xi_j  = \pm \, (j^2 \alpha^2 + \gamma_j^2)^{1/2}$. In this case, $\mathcal{R}_j$ is symmetric and the matrices $\alpha {\mathcal{D}}_j + {\mathcal{R}}_j$ are diagonalizable by unitary matrices $S_j$, thus
	\begin{equation*}
	\alpha \mathcal{D}_j + \mathcal{R}_j = S_j
	\left (
	\begin{array}{cr}
	\sqrt{j^2 \alpha^2 + \gamma_j^2}     &  0  \\
	0          &   - \, \sqrt{j^2 \alpha^2 + \gamma_j^2}
	\end{array}
	\right)
	S_j^{-1}.
	\end{equation*}

	When $\alpha =0$ and $\omega = i\beta \neq 0$, the eigenvalues are given by $\xi_j  = \pm \, (\gamma_j^2 - \beta^2j^2 )^{1/2}$, and
	\begin{equation}\label{eig-val-1}
	\gamma_j^2 -\beta^2 j^2 = j^2 \left( \dfrac{\gamma_j^2}{{j^2}}  -\beta^2 \right) < 0, \ \textrm{ as } \ j \to \infty,
	\end{equation}
	since $|\gamma_j| \leqslant Cj^{\delta}$, with $\delta<1$.

	Thus, there exists $j_0 \in \N$ such that the eigenvalues are given by
	\begin{equation}\label{eigenv-non=comm-beta-nao-zero}
	\xi_j = \pm i \, (\beta^2 j^2 - \gamma_j^2)^{1/2}, \ j \geqslant j_0,
	\end{equation}
	and the matrices are diagonalizable, that is,
	\begin{equation*}
	i\beta \mathcal{D}_j + \mathcal{R}_j = S_j
	\left (
	\begin{array}{cr}
	i \, (j^2 \beta^2 - \gamma_j^2)^{1/2}     &  0  \\[3mm]
	0          &   -i \, (j^2 \beta^2 - \gamma_j^2)^{1/2}
	\end{array}
	\right)
	S_j^{-1},
	\end{equation*}
	for $j\geqslant j_0$, with
	\begin{equation*}
	S_j =
	\left [
	\begin{array}{cc}
	-i   &  i  \\
	\dfrac{j \beta + (j^2\beta^2 -\gamma_j^2)^{1/2}} {\gamma_j}   & \dfrac{- j \beta + (j^2\beta^2 -\gamma_j^2)^{1/2}} {\gamma_j}
	\end{array}
	\right].
	\end{equation*}
	
	Observe that, for $j\geqslant j_0$ we have
	\begin{equation*}\label{Sj-1}
	\dfrac{| \pm j \beta + (j^2 -\gamma_j^2)^{1/2}|}{|\gamma_j|} \leqslant \dfrac{j |\beta| + (j^2 \beta^2 - \gamma_j^2)^{1/2} }{|\gamma_j|}  \leqslant 2 |\beta| \dfrac{j}{|\gamma_j|} \leqslant C j^{1 - \delta}.
	\end{equation*}

	Thus $\{\|S_j\|\}$ has at most polynomial growth. For the sequence $\{\|S_j^{-1}\|\}$ we observe that
	\begin{equation*}
	\|S_j^{-1}\| = |det (S_j)|^{-1} \|S_j\| =  2^{-1/2}  \, \left(\beta^2  \frac{j^2}{\gamma_j^2} - 1\right)^{-1/2}\|S_j\|.
	\end{equation*}
	
	Now, given $s = 1 - \delta$, we can increase $j_0 \in \N$, if necessary, in order to obtain $|\gamma_j| < |\beta| j^{1-s}$, thus
	\begin{equation*}
	\left(\beta^2 \dfrac{j^2}{\gamma_j^2} - 1\right)^{-1/2} \!\! \leqslant (j^{2s} - 1)^{-1/2} \leqslant 2 j^{-s}, \ j \geqslant j_0.
	\end{equation*}
	and $\|S_j^{-1}\|  \leqslant C j^{1 - \delta} 2j^{-s} = 2C, \ j \to \infty$.

	To finish this example, from Theorem \ref{general-GH} we have $D_t +  i \beta D_x + \mathcal{R}$ is globally hypoelliptic, and ${\mathcal{L}} = D_t +  \alpha D_x + R$ is (GH) if, and only if, there are positive constants $C$, $\theta$ and $R$ such that
	\begin{equation}\label{real-aprox-non-liouv}
	\inf_{\tau \in \Z} \left| \tau + \sqrt{ j^2 \alpha^2 + \gamma_j^2} \right| \geqslant Cj^{-\theta}, \ j \geqslant R.
	\end{equation}
	
\end{example}

\subsection{Perturbations that destroy the global hypoellipticity} \ \medskip

	As mentioned before,  when $\beta \neq 0$, the global hypoellipticity of  $D_t + (\alpha+i\beta) D_x$ is immune to the perturbations by low order terms.
	However, when $\beta =0$ and $\alpha$ is an irrational non-Liouville number, the statement $ii.$ in Remark \ref{cte-pert} and the expression \eqref{real-aprox-non-liouv} open possibilities to construct perturbations that destroys the global hypoellipticity of $D_t + (\alpha+i\beta) D_x$.

\begin{theorem}\label{existence-nGH}
	For any irrational $\alpha$ there exist a commutative perturbation $\mathcal{S}$ and a non commutative perturbation $\mathcal{R}$ such that
	\begin{equation*}
	{\mathcal{L}}_{\mathcal{S}} = D_t +  \alpha D_x + \mathcal{S} \quad and \quad {\mathcal{L}}_{\mathcal{R}} = D_t +  \alpha D_x + \mathcal{R}
	\end{equation*}	
	are not (GH).
\end{theorem}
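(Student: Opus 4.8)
The plan is to construct $\mathcal{S}$ and $\mathcal{R}$ directly from their matrix symbols, each supported on a sparse set of eigenvalues $\{\ell_n^2\}$ of $E=-D_x^2$, and to arrange that one branch of the associated eigenvalue sequence $\{\sigma_\ell\}$ lands exactly on $\Z$ for infinitely many $n$. Once that holds, the set $\Gamma$ of Proposition \ref{Gamma-finite} is infinite, and that proposition immediately yields that $\mathcal{L}_{\mathcal{S}}$ and $\mathcal{L}_{\mathcal{R}}$ are not (GH); in particular one never needs the sufficiency half of Theorem \ref{general-GH} (whose hypothesis "$\Gamma_Q$ finite" would fail anyway). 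I keep the identifications of the subsection: $\spec(E)=\{j^2\}$, $d_0=1$ and $d_j=2$ for $j\geq1$, and $\alpha D_x$ restricts on $E_{j^2}$ to $\alpha\mathcal{D}_j=\mathrm{diag}(-\alpha j,\alpha j)$.

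\textbf{The commutative perturbation.} I would fix any fast-growing sequence $\ell_1<\ell_2<\cdots$ in $\N$ and define $\mathcal{S}$ by the diagonal symbol $\mathcal{S}_0=0$, $\mathcal{S}_j=s_j I$, with $s_j=0$ for $j\notin\{\ell_n\}$ and $s_{\ell_n}=\lceil\alpha\ell_n\rceil-\alpha\ell_n\in[0,1)$. Then $\|\mathcal{S}_j\|\leq1$, so $\mathcal{S}$ has moderate growth (order $0$); it is $E$-invariant and continuous on every $\H^s(\T)$ by Proposition \ref{R-cont}; and it is commutative since every off-diagonal entry vanishes. The eigenvalues of $\alpha\mathcal{D}_{\ell_n}+\mathcal{S}_{\ell_n}$ are $\lceil\alpha\ell_n\rceil-2\alpha\ell_n$ and $\alpha\ell_n+s_{\ell_n}=\lceil\alpha\ell_n\rceil\in\Z$, while for $j\notin\{\ell_n\}$ they are $\pm\alpha j\notin\Z$ (as $\alpha\notin\Q$). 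Hence $\Gamma$ is infinite and $\mathcal{L}_{\mathcal{S}}$ is not (GH). (Choosing the $\ell_n$ among integers with $\|\alpha\ell_n\|$ small, which exist by Dirichlet's theorem, even makes $s_{\ell_n}\to0$, so $\mathcal{S}$ can be taken of negative order.)

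\textbf{The non-commutative perturbation.} Here I would stay inside the framework of Example \ref{n-comm-example1} with $\omega=\alpha\in\R$: take $\mathcal{R}_0=0$ and let $\mathcal{R}_j$ ($j\geq1$) have zero diagonal and off-diagonal entries $\gamma_j\in\R$. For any such choice $\alpha\mathcal{D}_j+\mathcal{R}_j$ is real symmetric with eigenvalues $\pm\sqrt{\alpha^2j^2+\gamma_j^2}$ and is diagonalized by a unitary $S_j$, so $\alpha D_x+\mathcal{R}$ is strongly diagonalizable with $\|S_j\|=\|S_j^{-1}\|=1$, and if $\{\gamma_j\}$ is bounded the symbol $\alpha\mathcal{D}_j+\mathcal{R}_j$ has moderate growth; thus Proposition \ref{Gamma-finite} applies, and to violate (GH) it suffices to make $\sqrt{\alpha^2\ell_n^2+\gamma_{\ell_n}^2}$ an integer for infinitely many $n$ (equivalently, to make \eqref{real-aprox-non-liouv} fail). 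I would do this with continued fractions: let $p_k/q_k$ be the convergents of $\alpha$; infinitely many satisfy $p_k/q_k>\alpha$, and along that sub-sequence $0<p_k-q_k\alpha<1/q_k$, so $p_k\in\N$ satisfies $p_k>q_k\alpha$ and $p_k^2-q_k^2\alpha^2=(p_k-q_k\alpha)(p_k+q_k\alpha)=\mathcal{O}(1)$. Put $\ell_n=q_{k_n}$ along a rapidly increasing sub-subsequence, $\gamma_{\ell_n}=\sqrt{p_{k_n}^2-\ell_n^2\alpha^2}>0$ (nonzero since $\alpha\notin\Q$), and $\gamma_j=0$ otherwise. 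Then $\mathcal{R}$ is bounded (order $0$), $E$-invariant and continuous, non-commutative because some $\gamma_{\ell_n}\neq0$, and $\sqrt{\alpha^2\ell_n^2+\gamma_{\ell_n}^2}=p_{k_n}\in\Z$ for all $n$; hence $\Gamma$ is infinite and $\mathcal{L}_{\mathcal{R}}$ is not (GH).

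\textbf{Prescribing the order, and the main obstacle.} To match the stronger claim in the introduction that $\mathcal{R}$ can be taken of any order $\delta\in(0,1)$, the idea is to keep this scheme but widen the gap $\tau-\ell\alpha$: instead of $\ell_n=q_{k_n}$ I would take $\ell_n=m_n q_{k_n}$ and $\tau_n=m_n p_{k_n}$ with $m_n\to\infty$ chosen so that $m_n(p_{k_n}-q_{k_n}\alpha)\asymp\ell_n^{2\delta-1}$ — admissible because $p_{k_n}-q_{k_n}\alpha<1/q_{k_n}$ leaves room before the orbit $\{\ell\alpha\}$ wraps past $\tau_n$ — which forces $\gamma_{\ell_n}^2=(\tau_n-\ell_n\alpha)(\tau_n+\ell_n\alpha)\asymp\ell_n^{2\delta}$, i.e. $\mathrm{ord}(\mathcal{R})=\delta$, while still $\sqrt{\alpha^2\ell_n^2+\gamma_{\ell_n}^2}=\tau_n\in\Z$. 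I expect this last step to be the only genuinely delicate part: hitting an integer \emph{exactly} rigidly links $\gamma_{\ell_n}^2$ to $\tau_n^2-\ell_n^2\alpha^2$, so the order of $\mathcal{R}$ is dictated by the Diophantine position of $\ell_n\alpha$ relative to $\Z$, and one must choose the $\ell_n$ — through the convergents of $\alpha$, and integer multiples of their denominators when $\delta<1/2$ — so as to prescribe that position without wrap-around; verifying that the resulting $\gamma_{\ell_n}$ really has order $\delta$ (two-sided bounds) is the one computation requiring care. For the bare statement of Theorem \ref{existence-nGH}, however, the order-$0$ constructions above already suffice.
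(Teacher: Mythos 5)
Your proposal is correct (modulo one fix for $\alpha<0$ noted below) and follows essentially the same strategy as the paper's proof: place infinitely many eigenvalues $\sigma_\ell$ exactly in $\Z$ so that $\Gamma$ is infinite, and conclude non-(GH). A few refinements are worth noting. You correctly observe that Proposition \ref{Gamma-finite} is the clean reference, since the blanket hypothesis ``$\Gamma_Q$ finite'' of Theorem \ref{general-GH} fails by construction; the paper cites the theorem (whose necessity argument does still go through), but the proposition is the right pointer. Your commutative construction, $\mathcal{S}_j=s_jI$ with $s_{\ell_n}=\lceil\alpha\ell_n\rceil-\alpha\ell_n$ on an arbitrary sparse set, dispenses with continued fractions entirely, whereas the paper takes $r_{q_k}=\alpha q_k-p_k$ from the convergents (yielding the integer eigenvalue $-p_k$ on the branch $-\alpha q_k+r_{q_k}$). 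For the non-commutative case the paper, unlike you, pads with $\gamma_j=\sqrt{j}$ off the sparse set, producing a perturbation of order exactly $1/2$; your choice $\gamma_j=0$ elsewhere gives order $0$, which is all the bare theorem statement requires. The paper does the former to connect with its introduction's claim about perturbations of any order $<1$, a point you also address, correctly flagging the exact-integer-hit as the delicate part.

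The gap: as written, the non-commutative construction is not valid for $\alpha<0$. With $p_k/q_k>\alpha$ and $\alpha<0$ one typically has $p_k<0$ (since $p_k\approx\alpha q_k$), and $p_k^2-q_k^2\alpha^2=(p_k-q_k\alpha)(p_k+q_k\alpha)$ then has a negative second factor, so $\gamma_{\ell_n}$ is imaginary. The paper's $\alpha<0$ branch has the same defect: it chooses $p_k/q_k<\alpha<0$ and sets $\gamma_{q_k}=\sqrt{\alpha^2q_k^2-p_k^2}$, whose radicand is again negative. The remedy is trivial because the relevant eigenvalues $\pm\sqrt{\alpha^2j^2+\gamma_j^2}$ depend on $\alpha$ only through $\alpha^2$: run the entire argument with $|\alpha|$ in place of $\alpha$, taking $p_k/q_k$ among the convergents of $|\alpha|$ with $p_k/q_k>|\alpha|$, so that $p_k>|\alpha|q_k>0$ and $\gamma_{\ell_n}$ is real. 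You should state that reduction explicitly in order to cover all irrational $\alpha$ as claimed.
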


\begin{proof}
	Let us start by analyzing the non commutative case. Firstly, assume that $\alpha >0$ and consider two sequences of natural numbers $\{p_k\}$ and $\{q_k\}$ such that
	$$\lim_{k \to \infty} p_k / q_k = \alpha, \mbox{ and } \  p_k / q_k > \alpha,  \forall k \in \N.$$
	
	This can be done, for example, by considering a convenient subsequence of the convergents of the continued fractions of $\alpha$ (see Chapter I of \cite{schmidt}).

	Let us define a sequence $\{\gamma_j\}_{j \in \N}$ by $\gamma_{q_k} = \sqrt{p_k^2 - \alpha^2 \, q_k^2}$ and $\gamma_j = \sqrt{j \ }$, if $j \neq q_k$, $\forall k \in \N$.

	Since
	\begin{align*}\label{to-zero}
	\lim_{k \rightarrow \infty} \dfrac{\gamma_{q_k}}{q_k} = \lim_{k \rightarrow \infty} \sqrt{{p_k^2}/{q_k^2} - \alpha^2 } =0,
	\end{align*}
	then $|\gamma_j| \leqslant C j^{\, \delta}$, with $\delta = 1/2$.
	
	The eigenvalues of $\alpha {\mathcal{D}}_j + {\mathcal{R}}_j$ are given by
	$$\sigma_j  \doteq \sqrt{j^2\alpha^2 + \gamma_j^2}, \ j \in \N,$$
	therefore
	\begin{equation*}
	\sigma_{q_k}  = \sqrt{q_k^2 \alpha^2 + \gamma_{q_k}^2} = \sqrt{\alpha^2 q_k^2  + (p_k^2 - \alpha^2 \, q_k^2)} = p_k \in \N.
	\end{equation*}
	
	It follows from Theorem \ref{general-GH} that  ${\mathcal{L}}_{\mathcal{R}}$ is not (GH).
	
	\smallskip
	
	Now, if $\alpha<0$, simply choose  sequences $p_k \in \Z$ and $q_k \in \N$ such that $p_k / q_k \to \alpha$, and $p_k / q_k < \alpha,$ for each $k \in \N.$
	
	As before, we define the sequence $\{\gamma_j\}_{j \in \N}$ by $\gamma_{q_k} = \sqrt{\alpha^2 \, q_k^2 - p_k^2}$ and $\gamma_j = \sqrt{j \ }$, if $j \neq q_k$, $\forall k \in \N$, and repeat the  same procedure above to show that ${\mathcal{L}}_{\mathcal{R}}$ is not (GH).
	
	In the commutative case we repeat the same arguments to the sequence $\{r_j\}_{j \in \N}$. For example, when $\alpha>0$ we set $r_{q_k} \doteq \alpha q_k - p_k,$  and $r_j \doteq \sqrt{j \ }$, if $j \neq q_k$, $\forall k \in \N$. It follows from Theorem \ref{general-GH} that ${\mathcal{L}}_{\mathcal{S}}$ is not (GH).
	
\end{proof}

\section{Perturbations of normal operators \label{section5}}

In the previous section we presented examples of perturbations whose analysis depended essentially on the discovery of the eigenvalues of the sum of two operators.
However, the problem of finding the eigenvalues of a sum of operators is a highly nontrivial problem that does not have a complete answer, even in simpler cases such as the sum of Hermitian or normal matrices.  For more details about this subject, we refer the reader to \cite{Day-98,Klya-98,Tomp-76,Tomp-71,Helm-55}, where the authors investigate the spectrum of sums of matrices.

In this section the idea is to approach this problem from a different point of view, namely, let us assume that the eigenvalues and eigenvectors of the perturbed operator $Q(\epsilon)$ depend analytically on the eigenvalues and eigenvectors of the original operator $Q$.

Let $Q$ and $\mathcal{R}$ be $E-$invariant operators defined on the closed manifold $M$, and consider the perturbed operators
\begin{equation}\label{Q_epsilon}
Q(\epsilon) = Q + \epsilon \mathcal{R},
\end{equation}
for $\epsilon \in \C,$ and
\begin{equation}\label{L-perturbed}
L(\epsilon) = D_t + Q(\epsilon).
\end{equation}

Thus $Q(0)= Q$ and $L(0) = D_t + Q(0) = L$  stands for the unperturbed operator.
The restrictions of $Q(\epsilon)$ to the eigenspaces of $E$ are given by
\begin{equation*}
Q_{j}(\epsilon) \doteq Q(\epsilon)  \Big|_{E_{\lambda_j}}, \ j \in \N_0.
\end{equation*}

From now on, we assume that $Q(\epsilon)$ is a normal operator, for $\epsilon$  sufficiently small.

Following our usual procedure, if $u \in \D(\T \times M)$ satisfies the equation
$$(D_t + Q(\epsilon)) u(t,x) = f(t,x),$$
with $f \in C^\infty(\T\times M)$, then its $x-$Fourier coefficients are solutions of the system of equations
\begin{equation}\label{syst-abstrac-pert}
\big( D_t+ Q_{j}(\epsilon)^\top \big) \widehat{U}_j(t) = \widehat{F}_j(t),
\end{equation}
where $\widehat{U}_j(t) = \big(\widehat{u}_j^1(t), \ldots, \widehat{u}_j^{\, d_j}(t)\big)$ and $\widehat{F}_j(t) = \big(\widehat{f}_j\,\!^1(t), \ldots, \widehat{f}_j\,\!^{d_j}(t)\big)$ with $\widehat{u}_j^{\,k}(t),\widehat{f}_j\,\!^{k}(t)$ in $C^\infty (\T)$, for $1 \leqslant k \leqslant d_j$ and $j \in \N_0$.

By Example \ref{example-normal-case}, the operator $Q(\epsilon)$ is strongly diagonalizable by a sequence of unitary matrices $\{S_{j}(\epsilon)\}_{j\in \N_0}$, therefore \eqref{syst-abstrac-pert} is equivalent to the diagonal system
\begin{equation*}
D_t \widehat{V}_j(t)+ \mbox{diag} (\sigma_{j}^{1}(\epsilon), \ldots , \sigma_{j}^{d_{j}}(\epsilon)) \widehat{V}_j(t)  = \widehat{G}_j(t).
\end{equation*}
where $\widehat{V}_j(t) = i S_j(\epsilon) \widehat{U}_j(t)$, $\widehat{G}_j(t) = i S_j(\epsilon) \widehat{F}_j(t)$ and $t \in \T$.

The next step is to obtain a relation between the eigenvalues  $\sigma_{j}^{m}(\epsilon)$ of $Q_{j}(\epsilon)$ and the corresponding eigenvalues $\sigma_{j}^{m}$ of the unperturbed matrix $Q_{j}$.

Motivated by the theory presented in T. Kato \cite{kato} and F. Rellich \cite{Rellich}, we assume that the eigenvalues and eigenvectors of $Q_{j}(\epsilon)$ have an analytic expansion in the form
\begin{equation}\label{eigen-expansion}
\sigma_{j}^m(\epsilon) = \sigma_{j}^m + \sum_{k=1}^{\infty} \sigma_{j, k}^m \, \epsilon^k \ \textrm{ and }
\ v_{j}^m(\epsilon) = v_{j}^m + \sum_{k=1}^{\infty} v_{j, k}^m \, \epsilon^k,
\end{equation}
where each $\sigma_{j}^m$ is an eigenvalue of the matrix $Q_{j}$ and  $v_{j}^m$ is its corresponding eigenvector.

Obviously, the calculation of coefficients $\sigma_{j, k}^m$ is the core of the problem. Thankfully, the second chapter of Kato's book \cite{kato} is entirely dedicated to this problem. In the next section, we will calculate these coefficients explicitly in some examples.

Now, as in \eqref{mu-nu-ell} we rearrange the terms of the sequences $\sigma_{j}^m(\epsilon)$ by writing
\begin{equation*}
\{\sigma_{\ell}(\epsilon)\}_{\ell\in\N} \doteq  \{ \sigma_{1}^1(\epsilon), \ldots, \sigma_{1}^{d_1}(\epsilon),\sigma_{2}^1(\epsilon), \ldots, \sigma_{2}^{d_1}(\epsilon), \ldots, \sigma_{\ell}^{1}(\epsilon), \ldots, \sigma_{\ell}^{d_\ell}(\epsilon), \ldots \},
\end{equation*}
thus we have
\begin{equation}\label{eigen-expansion2}
\sigma_{\ell}(\epsilon) = \sigma_{\ell} + \sum_{k \in \N} \sigma_{\ell, k}  \, \epsilon^k,
\end{equation}
where $\{\sigma_{\ell}\}$ is the rearrangement of the eigenvalues of $Q$.

Thus, the following result is a consequence  of Theorem \ref{general-GH}.

\begin{theorem}\label{prop-perturbation}
	Let $Q$ and $\mathcal{R}$ be $E-$invariant operators on the manifold $M$, and set $Q(\epsilon) = Q + \epsilon \mathcal{R},$ with $\epsilon \in \C.$
	Assume that $Q(\epsilon)$ is normal in some interval $|\epsilon|< \epsilon_0$ and that $\Gamma_{Q(\epsilon)} = \{\ell \in \N; \ \sigma_{\ell}(\epsilon) \in \Z \}$ is finite.
	
	Then the  operator $L(\epsilon) = D_t + Q(\epsilon)$ is (GH) if, and only if, there are constants $C,\theta>0$ and $\ell_0 \in \N_0$ such that
	\begin{equation}\label{nonLiouville-GW2}
	\inf_{\tau \in \Z} \left|\tau + \sigma_{\ell} + \sum_{k \in \N} \sigma_{\ell, k} \epsilon^k  \right| \geqslant C \ell^{-\theta}, \quad  \ell \geqslant \ell_0.
	\end{equation}
\end{theorem}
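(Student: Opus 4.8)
The plan is to derive Theorem \ref{prop-perturbation} as an immediate corollary of Theorem \ref{general-GH}, applied to the operator $Q(\epsilon)$ in place of $Q$. The only thing one needs to check is that the hypotheses of Theorem \ref{general-GH} are satisfied by $Q(\epsilon)$ for the fixed small value of $\epsilon$ at hand. First I would observe that $Q$ and $\mathcal{R}$ are $E$-invariant operators, hence so is their linear combination $Q(\epsilon)=Q+\epsilon\mathcal{R}$; indeed condition (i) of Proposition \ref{T-DR-1}, namely $[Q(\epsilon),E]=0$, follows at once from $[Q,E]=[\mathcal{R},E]=0$, and the domain of $Q(\epsilon)^*=Q^*+\bar{\epsilon}\,\mathcal{R}^*$ contains $C^\infty(M)$ because the same holds for $Q^*$ and $\mathcal{R}^*$. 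Consequently $Q(\epsilon)$ has a matrix symbol $\{Q_j(\epsilon)\}_{j\in\N_0}$ with $Q_j(\epsilon)=Q_j+\epsilon\mathcal{R}_j$, and since $\|Q_j(\epsilon)\|\leqslant\|Q_j\|+|\epsilon|\,\|\mathcal{R}_j\|$, the symbol of $Q(\epsilon)$ has moderate growth whenever those of $Q$ and $\mathcal{R}$ do.

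Next I would invoke the standing assumption that $Q(\epsilon)$ is normal on the interval $|\epsilon|<\epsilon_0$: by Example \ref{example-normal-case}, every normal $E$-invariant operator is strongly diagonalizable, in fact by a sequence of unitary matrices $S_j(\epsilon)$, so the constants $r,s$ in Definition \ref{strong-diag} can both be taken to be $0$. Thus for each admissible $\epsilon$ the operator $Q(\epsilon)$ is strongly diagonalizable with matrix symbol of moderate growth, and the reordered sequence of its eigenvalues is precisely $\{\sigma_\ell(\epsilon)\}_{\ell\in\N}$ from \eqref{eigen-expansion2}. The remaining hypothesis of Theorem \ref{general-GH} is that the exceptional set be finite, and this is exactly the assumption $\Gamma_{Q(\epsilon)}=\{\ell\in\N;\ \sigma_\ell(\epsilon)\in\Z\}$ finite that appears in the statement.

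With all the hypotheses of Theorem \ref{general-GH} verified for $Q(\epsilon)$, that theorem gives directly: $L(\epsilon)=D_t+Q(\epsilon)$ is (GH) if and only if there are constants $C,\theta>0$ and $\ell_0\in\N_0$ such that $\inf_{\tau\in\Z}|\tau+\sigma_\ell(\epsilon)|\geqslant C\ell^{-\theta}$ for all $\ell\geqslant\ell_0$. Finally I would substitute the analytic expansion \eqref{eigen-expansion2}, $\sigma_\ell(\epsilon)=\sigma_\ell+\sum_{k\in\N}\sigma_{\ell,k}\,\epsilon^k$, to rewrite the Diophantine condition in the form \eqref{nonLiouville-GW2}, which completes the proof. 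There is essentially no obstacle here beyond bookkeeping: the real content has already been isolated in Theorem \ref{general-GH} and in the normality assumption, and the only point requiring a word of care is noting that normality forces the diagonalizing matrices to be unitary, so that the polynomial-growth bounds on $S_j(\epsilon)$ and $S_j(\epsilon)^{-1}$ hold trivially and uniformly — one does not need any quantitative control on how $S_j(\epsilon)$ depends on $\epsilon$, since $\epsilon$ is fixed throughout the argument.
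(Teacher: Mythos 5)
Your proposal is correct and follows essentially the same route as the paper: the paper's proof also reduces directly to Theorem \ref{general-GH}, using normality to guarantee strong diagonalizability (by unitary matrices) and the validity of the eigenvalue expansion \eqref{eigen-expansion2} (which the paper justifies by citing Jamison's perturbation result for normal operators). Your additional verifications of $E$-invariance and moderate growth of the symbol of $Q(\epsilon)$ are sound bookkeeping that the paper leaves implicit.
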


\begin{proof}
	Since the operator $Q(\epsilon)$ is normal, it follows from \cite{Jamison54} that the eigenvalues of $Q(\epsilon)$ can be written in the form \eqref{eigen-expansion2}. Thus the result follows directly from Theorem \ref{general-GH}.
	
	Recall that, by Proposition \ref{Gamma-finite}, the requirement of $\Gamma_{Q(\epsilon)}$ to be finite is a necessary condition to study the global hypoellipticity of $L(\epsilon)$.
	
\end{proof}

\begin{remark}
	If the operators $Q$ and $\mathcal{R}$ are simultaneously strongly diagonalizable, that is, booth are strongly diagonalizable by the same sequence of matrices $\{S_j\}_{j\in\N_0}$, then we can discard the hypothesis of normality in the last theorem.
	
	Indeed, in this case the eigenvalues of $Q_{j}(\epsilon) = Q_j + \epsilon \mathcal{R}_j,$ are given by $\sigma^m_j + \epsilon \rho^m_j$, where  $\rho^m_j$ are the eigenvalues of $\mathcal{R}_j$.
	Thus, $L(\epsilon) = D_t + Q(\epsilon)$ is (GH) if, and only if,
	$$
	\inf_{\tau \in \Z} |\tau + \sigma_{\ell} + \epsilon \rho_{\ell}|  \geqslant C {\ell}^{-\theta}, \mbox{ as } {\ell} \to \infty,
	$$
	where $\{\rho_{\ell}\}$  is the rearrangement of the terms $\rho_j^m$, as in \eqref{mu-nu-ell}.
\end{remark}

\begin{example}\label{comm-example}
	Recalling the definitions of operators of Type I and II given in \ref{typeI+II}, if the operators $Q$ and $\mathcal{R}$ are simultaneously strongly diagonalizable, it follows of the above remark that:
	\begin{enumerate}
		\item[$i.$]  if $\mathcal{R}$ is self-adjoint and $L=D_t+Q$ is of Type II then $L(\epsilon)$ is (GH), since
		$Im(\sigma_{\ell}(\epsilon)) = Im( \sigma_{\ell})$.
		
		\item[$ii.$]  if $i\mathcal{R}$ is self-adjoint and $L$ is of Type I, then $L(\epsilon)$ is (GH), since $
		|\tau + Re(\sigma_{\ell}(\epsilon))| = |\tau + Re(\sigma_{\ell})|,$  for all $\tau \in \Z.$
	\end{enumerate}
\end{example}

\begin{remark}
	Analyzing more carefully  the results of this section, it is clear that the normality hypothesis on $Q(\epsilon)$ was used only in two moments: first, to ensure that $Q(\epsilon)$ is strongly diagonalizable, that is, to diagonalize the system of equations \eqref{syst-abstrac-pert} by a sequence of unitary matrices; second, to ensure that the eigenvalues and eigenvectors of $Q(\epsilon)$ can be written in the form \eqref{eigen-expansion}.
	
	Therefore, it is possible to discard the normality hypothesis, requiring that the operator $Q(\epsilon)$ be strongly diagonalizable and that the eigenvalues and eigenvectors of $Q(\epsilon)$ can be written in the form  \eqref{eigen-expansion}.
\end{remark}

In view of this remark, we have the following result.

\begin{theorem}\label{prop-perturbation2}
	Let $Q$ and $\mathcal{R}$ be $E-$invariant operators on the manifold $M$, and set $Q(\epsilon) = Q + \epsilon \mathcal{R},$ with $\epsilon \in \C.$
	Assume that in some interval $|\epsilon|< \epsilon_0$ the operator $Q(\epsilon)$ is strongly diagonalizable, that $\Gamma_{Q(\epsilon)} = \{\ell \in \N; \ \sigma_{\ell} (\epsilon) \in \Z \}$ is finite, and that the eigenvalues of $(\epsilon)$ can be written in the form  \eqref{eigen-expansion}.
	
	Then the  operator $L(\epsilon) = D_t + Q(\epsilon)$ is (GH) if, and only if, there are constants $C,\theta>0$ and $\ell_0 \in \N_0$ such that
	\begin{equation}\label{nonLiouville-GW3}
	\inf_{\tau \in \Z} \left|\tau + \sigma_{\ell} + \sum_{k \in \N} \sigma_{\ell, k} \epsilon^k  \right| \geqslant C \ell^{-\theta}, \quad  \ell \geqslant \ell_0.
	\end{equation}
\end{theorem}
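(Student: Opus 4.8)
The plan is to obtain this statement as an immediate consequence of Theorem~\ref{general-GH}, following verbatim the argument used for Theorem~\ref{prop-perturbation} but replacing the appeal to normality and to \cite{Jamison54} by the two standing hypotheses at hand: that $Q(\epsilon)$ is strongly diagonalizable for $|\epsilon| < \epsilon_0$ and that its eigenvalues admit the analytic expansion \eqref{eigen-expansion}. Concretely, I would first check that, for each fixed $\epsilon$ with $|\epsilon| < \epsilon_0$, the operator $L(\epsilon) = D_t + Q(\epsilon)$ falls under the scope of Theorem~\ref{general-GH}. Indeed, $Q(\epsilon) = Q + \epsilon\mathcal{R}$ is $E$-invariant, since its restriction to $E_{\lambda_j}$ is the matrix $Q_j + \epsilon\mathcal{R}_j$; it is strongly diagonalizable by hypothesis; its matrix symbol has moderate growth because $\|Q_j + \epsilon\mathcal{R}_j\| \leqslant \|Q_j\| + |\epsilon|\,\|\mathcal{R}_j\|$ and both $Q$ and $\mathcal{R}$ have moderate-growth symbols; and $\Gamma_{Q(\epsilon)}$ is finite by hypothesis.

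With these verifications in place, Theorem~\ref{general-GH} applies and yields that $L(\epsilon)$ is (GH) if and only if there exist $C,\theta>0$ and $\ell_0 \in \N_0$ with
\begin{equation*}
\inf_{\tau \in \Z} |\tau + \sigma_\ell(\epsilon)| \geqslant C\,\ell^{-\theta}, \quad \ell \geqslant \ell_0,
\end{equation*}
where $\{\sigma_\ell(\epsilon)\}_{\ell\in\N}$ denotes the rearrangement of the eigenvalue sequences $\{\sigma_j^m(\epsilon)\}$ introduced just before the statement. It then remains only to insert the analytic expansion: by \eqref{eigen-expansion2}, obtained from \eqref{eigen-expansion} after the same rearrangement, one has $\sigma_\ell(\epsilon) = \sigma_\ell + \sum_{k\in\N}\sigma_{\ell,k}\,\epsilon^k$, and substituting this into the displayed inequality gives precisely \eqref{nonLiouville-GW3}.

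There is no serious analytic difficulty here: all the substance is already contained in the unperturbed Theorem~\ref{general-GH}, and the present result merely repackages it under hypotheses adapted to the perturbative framework of this section, exactly as anticipated in the preceding remark. The one point I would be careful about is the bookkeeping of the rearrangement — one must ensure that the index $\ell$ labels $\sigma_\ell(\epsilon)$ and $\sigma_\ell$ compatibly, so that $\sigma_\ell + \sum_{k}\sigma_{\ell,k}\epsilon^k$ is genuinely the perturbed eigenvalue and not a mismatched pair; since for each fixed $j$ the finitely many eigenvalues $\sigma_j^m(\epsilon)$ are relabelled simultaneously with their unperturbed counterparts $\sigma_j^m$, this compatibility is automatic, and the proof is complete.
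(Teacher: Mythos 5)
Your proposal is correct and follows essentially the same route the paper intends. The paper in fact gives no explicit proof of Theorem~\ref{prop-perturbation2}: it states the result as an immediate consequence of the remark preceding it, which observes that normality in Theorem~\ref{prop-perturbation} was used only to guarantee (a) strong diagonalizability of $Q(\epsilon)$ and (b) the analytic expansion \eqref{eigen-expansion} of its eigenvalues, and that once these are taken as hypotheses the conclusion follows from Theorem~\ref{general-GH} exactly as before. You have spelled out the verification of the remaining hypotheses of Theorem~\ref{general-GH} (the $E$-invariance of $Q(\epsilon)$, the moderate growth of its symbol, the finiteness of $\Gamma_{Q(\epsilon)}$) and flagged the bookkeeping of the rearrangement in \eqref{eigen-expansion2}, both of which the paper leaves implicit; this is a reasonable amount of extra care and does not constitute a different approach.
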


\begin{remark}
	With the same hypothesis as in Theorem \ref{prop-perturbation} (or Theorem \ref{prop-perturbation2}),
	assume that there are positive constants $C_1$ and $\theta_1$, and natural numbers $\ell_0$ and $N$ such that
	\begin{equation}\label{1}
	\inf_{\tau \in \Z}\left|\tau + \sigma_{\ell} +  \sum_{k=1}^{N} \sigma_{\ell, k} \epsilon^k\right| \geqslant C_1 \ell^{-\theta_1},
	\end{equation}
	for $\ell \geqslant \ell_0$ and  $|\epsilon|<\epsilon_0 <\frac{1}{2}$
	
	If there are $\theta_2 > \theta_1$ and $C_2>0$ such that
	\begin{equation}\label{2}
	\sup_{k \geqslant N+1}|\sigma_{\ell,k}| \leqslant C_2 \ell^{-\theta_2}, \quad \ell \geqslant \ell_0
	\end{equation}
	then, for any  $|\epsilon|<\epsilon_0$, the operator $L(\epsilon) = D_t + Q(\epsilon)$ is (GH).
	
	Indeed, under these assumptions we have
	\begin{align*}
	\left|\tau + \sigma_{\ell} + \sum_{k\in \N} \sigma_{\ell, k} \epsilon^k\right| & \geqslant
	\left|\tau + \sigma_{\ell} +  \sum_{k=1}^{N} \sigma_{\ell, k} \epsilon^k\right| -
	\left| \sum_{k = N+1}^{\infty} \sigma_{\ell, k} \epsilon^k\right| \\
	& \geqslant
	C_1 \ell^{-\theta_1} -
	\sup_{k \geqslant N+1}|\sigma_{\ell,k}| \,
	\sum_{k = N+1}^{\infty} |\epsilon|^k \\
	& \geqslant
	C_1 \ell^{-\theta_1}-C_2 \ell^ {-\theta_2} \dfrac{1}{2^N}\\
	& =
	\ell^{-\theta_1}\left(C_1-C_2\dfrac{1}{2^N} \, \ell^{\,\theta_1 -\theta_2}  \right) \\
	& \geqslant
	\ell^{-\theta_1}\left(C_1-C_2\dfrac{1}{2^N} \, \ell_0^{\,\theta_1 -\theta_2}  \right)
	\end{align*}
	Since $\theta_1 <\theta_2$ we can increase $\ell_0$, if necessary, to obtain 
	$$C_1-\dfrac{C_2}{2^N} \, \ell_0^{\,\theta_1 -\theta_2} >0.$$ 
	
	It follows from Theorem \ref{prop-perturbation} (or Theorem \ref{prop-perturbation2}) that  $L(\epsilon) = D_t + Q(\epsilon)$ is (GH).
	
\end{remark}

\section{Analytic perturbations of vector fields on the torus}

In this section we calculate the coefficients $\sigma_{j, k}^m$ of the expansion \eqref{eigen-expansion} of the eigenvalues of $Q_{j}(\epsilon) = \omega \mathcal{D}_j + \epsilon \mathcal{R}_j$
and analyze the global hypoellipticity of  the operator
\begin{equation*}
L(\epsilon) = D_t + \omega D_x + \epsilon \mathcal{R},
\end{equation*}

To this end, we will rely on the notations and results presented mainly in section \ref{section4}.
Recall that $E_0=\C$ and $E_{j^2} = span \{e^{-ijx}, e^{ijx}\}$, for $j\in \N$, are the eigenspaces of $E = -D_x^2$.

Given  a sequence of matrices $\{\mathcal{R}_j; j \in \N_0\}$ with
\begin{equation*}\label{seq-self-adj-matrix2}
{\mathcal{R}}_0 \in \C,
\textrm{ and } \
\mathcal{R}_{j} =
\left [
\begin{array}{cr}
a_j    & b_j \\
c_j    & d_j
\end{array}
\right]  \in \C^{2\times 2},  \ j \in \N,
\end{equation*}
satisfying  the moderate growth condition
$\|{\mathcal{R}}_j\| = \mathcal{O}(j^{\, \delta}),$ for $0 \leqslant \delta < 1,$ we define
\begin{equation*}\label{op-A2}
{\mathcal{R}} u \doteq \sum_{j\in \N_0} \left\langle \widehat{u}_{j}, {\mathcal{R}}_j e_j(x)  \right\rangle_{\C^2},
\end{equation*}
for all $u \in \D(\T)$.

It follows from Proposition \ref{R-cont}  that the operator $\mathcal{R}: \H^{s}(\T) \to \H^{s - \delta}(\T)$ is linear and continuous, for all $s \in \Z$.

The restrictions of $Q(\epsilon)=\omega D_x + \epsilon \mathcal{R},$ to the eigenspaces of $E$ are given by
\begin{equation*}
Q_{j}(\epsilon) \doteq Q(\epsilon)\Big|_{E_{j^2}}, \ j \in \N_0,
\end{equation*}
where $Q(0) = \omega D_x.$

Let us denote the eigenvalues of the matrix $Q_{j}(0)=\omega\mathcal{D}_j$, for $j\in\N$, by
\begin{equation*}\label{wj-simple-eigenv}
\sigma_{j}^1(0) = -\omega j \ \textrm{ and } \ \sigma_{j}^2(0) = \omega j,
\end{equation*}
and the corresponding eigenvectors $v_{j}^1(0) = (1,0)$ and $v_{j}^2(0) = (0,1)$.

The next results show that the eigenvalues and the corresponding eigenvectors of the matrices $Q_{j}(\epsilon)$, for $\epsilon$ small enough, can be written in the form
\begin{equation}\label{exp-eigenv-2}
\sigma_{j}^m(\epsilon) = \sum_{k=0}^{\infty} \sigma_{j, k}^m \, \epsilon^{k} \ \textrm{ and } \
v_{j}^m(\epsilon) = \sum_{k=0}^{\infty} v_{j, k}^m \, \epsilon^{k},
\end{equation}
for $m \in \{1,2\}$, and how to compute the coefficients of these series.

\begin{lemma}\label{lemma-coefficients}
	Let $\mathcal{A},\mathcal{B} \in \C^{d\times d}$ and assume that $\sigma_0$ is a simple eigenvalue of $\mathcal{A}$, with a corresponding eigenvector $v_0$. Then there is $\epsilon_0>0$ such that, for $|\epsilon|<\epsilon_0$, there is a simple eigenvalue $\sigma(\epsilon)$  of $\mathcal{A}_\epsilon = \mathcal{A}+\epsilon\mathcal{B}$, and a corresponding eigenvector $v(\epsilon)$ which can be written in the form
	\begin{equation}\label{exp-eigenv-1}
	\sigma(\epsilon) = \sum_{k=0}^{\infty} \sigma_k \epsilon^k \ \textrm{ and } \  v(\epsilon) = \sum_{k=0}^{\infty} v_k \epsilon^k,
	\end{equation}
	satisfying the condition $\left\langle  v(\epsilon), v_0^* \right\rangle  = 1,$ where $v_0^*$ is the eigenvector of $A^*$ associated to the eigenvalue $\bar{\sigma}_0$.
\end{lemma}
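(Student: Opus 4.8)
The statement is the classical analytic perturbation theory for a simple eigenvalue, so the plan is to reduce it to an application of the holomorphic implicit function theorem (equivalently, the analytic dependence of isolated spectral projections à la Kato). First I would recall that since $\sigma_0$ is a \emph{simple} eigenvalue of $\mathcal{A}$, the characteristic polynomial $p(\lambda)=\det(\lambda I-\mathcal{A})$ satisfies $p(\sigma_0)=0$ and $p'(\sigma_0)\neq 0$. Consider $P(\lambda,\epsilon)=\det(\lambda I-\mathcal{A}-\epsilon\mathcal{B})$, which is a polynomial in $\lambda$ and in $\epsilon$, hence entire in both variables, with $P(\sigma_0,0)=0$ and $\partial_\lambda P(\sigma_0,0)=p'(\sigma_0)\neq 0$. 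By the holomorphic implicit function theorem there is $\epsilon_0>0$ and a holomorphic function $\epsilon\mapsto\sigma(\epsilon)$ on $|\epsilon|<\epsilon_0$ with $\sigma(0)=\sigma_0$ and $P(\sigma(\epsilon),\epsilon)=0$; shrinking $\epsilon_0$ if necessary, $\sigma(\epsilon)$ remains a simple root of $\lambda\mapsto P(\lambda,\epsilon)$, i.e.\ a simple eigenvalue of $\mathcal{A}_\epsilon$. Being holomorphic at $0$, it admits the power series expansion $\sigma(\epsilon)=\sum_{k=0}^\infty\sigma_k\epsilon^k$ claimed in \eqref{exp-eigenv-1}.

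\textbf{Construction of the eigenvector.} For the eigenvector, the natural approach is to exhibit it via the Riesz spectral projection. Fix a small circle $\Gamma$ around $\sigma_0$ separating it from the rest of $\spec(\mathcal{A})$; for $|\epsilon|$ small the resolvent $(\zeta I-\mathcal{A}_\epsilon)^{-1}$ is holomorphic in $(\zeta,\epsilon)$ for $\zeta\in\Gamma$, so the projection
\begin{equation*}
P(\epsilon)=\frac{1}{2\pi i}\oint_{\Gamma}(\zeta I-\mathcal{A}_\epsilon)^{-1}\,d\zeta
\end{equation*}
is a rank-one holomorphic matrix-valued function of $\epsilon$ with $P(0)v_0=v_0$. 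Then $\widetilde{v}(\epsilon)=P(\epsilon)v_0$ is holomorphic, nonzero for $\epsilon$ small, and an eigenvector of $\mathcal{A}_\epsilon$ for $\sigma(\epsilon)$. To enforce the normalization $\langle v(\epsilon),v_0^*\rangle=1$, recall that $v_0^*$ is the eigenvector of $\mathcal{A}^*$ for $\overline{\sigma}_0$; simplicity of $\sigma_0$ gives $\langle v_0,v_0^*\rangle\neq 0$, so $\langle\widetilde{v}(\epsilon),v_0^*\rangle$ is holomorphic and nonzero at $0$, and I set $v(\epsilon)=\widetilde{v}(\epsilon)/\langle\widetilde{v}(\epsilon),v_0^*\rangle$, which is holomorphic near $0$ and hence expands as $\sum_{k\ge0}v_k\epsilon^k$ with the required normalization.

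\textbf{Main obstacle.} The only genuinely delicate point is the passage from ``simple eigenvalue of $\mathcal{A}$'' to ``$\widetilde v(\epsilon)\ne0$ and the eigenvalue stays simple and isolated'' uniformly on a disc $|\epsilon|<\epsilon_0$; this is handled by a continuity/compactness argument on the fixed contour $\Gamma$, using that $\epsilon\mapsto\mathcal{A}_\epsilon$ is a polynomial perturbation so all resolvent estimates are uniform there. Everything else is bookkeeping: once analyticity of $\sigma(\epsilon)$ and of the normalized $v(\epsilon)$ is established, the series \eqref{exp-eigenv-1} exist by definition of holomorphy, and the recursion for the $\sigma_k,v_k$ (needed in the subsequent lemmas) is obtained by substituting the series into $\mathcal{A}_\epsilon v(\epsilon)=\sigma(\epsilon)v(\epsilon)$ and matching powers of $\epsilon$, but that computation belongs to the lemmas that follow rather than to this existence statement.
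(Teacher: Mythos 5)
Your proof is correct and rests on the same underlying mathematics --- classical analytic perturbation theory for a simple eigenvalue --- but where the paper delegates, you prove. The paper's argument is essentially a citation: it invokes Kato for the existence of a holomorphic branch $\sigma(\epsilon)$ with a holomorphic eigenvector $v(\epsilon)$ normalized by $v(0)=v_0$, and it invokes Aslanyan--Davies (Lemma 1 and the proof of Theorem 1) for the nondegeneracy $\langle v_0, v_0^*\rangle\neq0$ that makes the normalization $\langle v(\epsilon),v_0^*\rangle=1$ possible. You instead obtain the holomorphy of $\sigma(\epsilon)$ by the holomorphic implicit function theorem applied to the characteristic polynomial $P(\lambda,\epsilon)=\det(\lambda I-\mathcal A-\epsilon\mathcal B)$, build the eigenvector from the Riesz projection $P(\epsilon)=\frac{1}{2\pi i}\oint_\Gamma(\zeta I-\mathcal A_\epsilon)^{-1}d\zeta$, and derive $\langle v_0,v_0^*\rangle\neq0$ directly from simplicity (the rank-one projection $P(0)$ has the form $x\mapsto\langle x,\phi\rangle v_0$ with $\langle v_0,\phi\rangle=1$, and $\phi$ is necessarily a nonzero multiple of $v_0^*$). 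What your self-contained argument buys is independence from Kato's general framework and from Aslanyan--Davies, at the cost of a somewhat longer exposition; the paper's choice to cite is natural since it immediately moves on to extracting the coefficient recursion, which is the content it actually needs downstream. One small remark: when you say ``shrinking $\epsilon_0$ if necessary, $\sigma(\epsilon)$ remains a simple root,'' this is automatic once $\Gamma$ isolates $\sigma_0$ and $\operatorname{rank}P(\epsilon)=\operatorname{rank}P(0)=1$ for small $\epsilon$, which you already have from the continuity of $P(\epsilon)$; no separate argument is needed.
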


From  Kato, for $\epsilon$ small, we have a simple eigenvalue $\sigma(\epsilon)$ and we can choose a corresponding eigenvector $v(\epsilon)$ depending holomorphically of $\epsilon$ such that $v(0) = v_0$.

Now, to find explicit formulas for the coefficients, by expanding in powers of $\epsilon$, we get for the coefficient of $\epsilon^k (k \geqslant 1)$ the identities
$$ (\mathcal{A}-\sigma_0)v_k = -\left(\mathcal{B}v_{k-1}-\sigma_{k}v_0- \sum_{\ell =1}^{k-1} \sigma_{k-\ell}v_\ell \right)$$
Assuming that we have found the pair $(\sigma_\ell,v_\ell)$, for $\ell < k$, we now determine the pair $(\sigma_k,v_k)$. First $\sigma_k$ is determined by the condition that $(\mathcal{A}-\sigma_0)v_k$ is orthogonal to $\ker(\mathcal{A}^* - \bar{\sigma}_0)$.

It is clear that $\bar{\sigma}_0$ is a simple eigenvalue of $\mathcal{A}^*$. Moreover by A. Aslanyan and E. Davies \cite{Asl-Dav} (see Lemma 1 and the proof of Theorem 1) the corresponding eigenvector $v_0^*$ satisfies $\langle v_0, v_0^* \rangle \neq 0$ and we can normalize $v_0^*$ by
$$ \langle v_0, v_0^* \rangle= 1.$$
Hence
$$ \sigma_k = \langle \mathcal{B}v_{k-1}, v_0^* \rangle - \sigma_{k}v_0- \sum_{\ell =1}^{k-1} \sigma_{k-\ell}\langle v_\ell, v_0^* \rangle = \langle \mathcal{B}v_{k-1}, v_0^* \rangle,$$
and then there exists a unique $v_k$ such that $ \langle v_k, v_0^* \rangle= 0.$

As considered in \cite{Asl-Dav}, the Lemma above is true under much weaker assumption on $\mathcal{A}$ (a closed operator) and $\mathcal{B}$, where $\C^k$ is replaced by an Hilbert space $\mathcal{H}$.

\begin{remark}\label{after-lemma-coef}
	If $\mathcal{A}$ is normal then $A^*u_0$ satisfies
	$ \mathcal{A}\mathcal{A}^*u_0 = \mathcal{A}^*\mathcal{A}u_0 = \sigma_0 \mathcal{A}^*u_0.$
	Hence $\mathcal{A}^*u_0 = \lambda u_0,$ for some $\lambda.$ Taking the scalar product with $u_0$ we obtain $\lambda = \bar{\sigma}_0$ and $u_0^*=u_0$. Therefore the coefficients in \eqref{exp-eigenv-1} satisfy
	\begin{equation}\label{exp-eigenv-3}
	\sigma_k = \left\langle  {\mathcal{B}} v_{k-1} , v_0 \right\rangle \mbox{ and } \
	({\mathcal{A}} - \sigma_0 I) v_k =  \sum_{n=1}^{k}\sigma_n v_{k- n}  - {\mathcal{B}} v_{k-1},
	\end{equation}
	for $k \in \N.$
\end{remark}

Since the eigenvalues of $Q_j = \omega \mathcal{D}_j$ are simple, for all $j$, we can use Lemma \ref{lemma-coefficients} and Remark \ref{after-lemma-coef} to calculate the eigenvalues of $Q_{j}(\epsilon)$.
It follows from condition $\left\langle  v_{j, \epsilon}^m , v_{j, 0}^m  \right\rangle  = 1$ that the eigenvectors $v_{j, k}^m$ are of type
\begin{equation}
v_{j, k}^1 = (0, \alpha_{j, k}) \ \textrm{ and } \ v_{j, k}^2 = (\beta_{j, k}, 0),
\end{equation}
for some complex numbers $\alpha_{j, k}$ and $\beta_{j, k}$, and
\begin{equation*}
(Q_{j} - \sigma_{j, 0}^m I) v_{j, k}^m =  \sum_{n=1}^{k}\sigma_{j, n}^m  v _{j, k-n}^m  - \mathcal{R}_{j} v_{j, k-1}^m,
\end{equation*}
for all $k \in \N$.

Thus, we obtain $\sigma_{j, 1}^1 = a_j$, $\sigma_{j, 1}^2 = d_j$,
\begin{equation}\label{sigma-calc}
\sigma_{j, k}^1 = \left\langle  \mathcal{R}_{j} v_{j, k-1}^1 , v_{j, 0}^1 \right\rangle = b_{j} \alpha_{j, k-1},  \ \forall k\geqslant 2,
\end{equation}
and
\begin{equation}\label{sigma-calc2}
\sigma_{j, k}^2 = \left\langle  \mathcal{R}_{j} v_{j, k-1}^2 , v_{j, 0}^2 \right\rangle = c_{j} \beta_{j, k-1},  \ \forall k \geqslant  2.
\end{equation}

By induction on $k$ it can be proved that
\begin{equation*}
\alpha_{j, k} =(2 \omega j)^{-1} \left( \alpha_{j, k-1} (a_{j} - d_{j})  + \sum_{n=2}^{k-1}  b_{j}\alpha_{j, n-1} \alpha_{j, k- n}\right),
\end{equation*}
and
\begin{equation*}
\beta_{j, k} =(2 \omega j)^{-1} \left( \beta_{j, k-1} (a_{j} - d_{j})  + \sum_{n=2}^{k-1}  c_{j}\beta_{j, n-1} \beta_{j, k- n}\right),
\end{equation*}

\noindent
whenever $k \geqslant 3$, and the next table exhibits the first three terms.

\vspace{2mm} \noindent
\begin{minipage}{\textwidth}
	\centering
	\renewcommand{\arraystretch}{2}
	\begin{small}
		\begin{tabular}{c|c|c|c|c|}
			$k$  & $\sigma_{j, k}^1$  & $\alpha_{j, k}$  & $\sigma_{j,k}^2$                            & $\beta_{j, k}$  \\  
			\hline 
			1  & $a_{j}$  &  $- \dfrac{c_{j}}{ 2 \omega j}$  & $d_{j}$ 	& $\dfrac{b_{j}}{ 2 \omega j}$   \\ [5pt]   
			2  &  $- \dfrac{b_{j}c_{j}}{2 \omega j}$ & $\dfrac{c_{j}(a_{j} - d_{j})}{ 4 \omega^2 j^2}$  & $\dfrac{b_{j}c_{j}}{2 \omega j}$ & $\dfrac{b_{j}(d_{j} - a_{j})}{ 4 \omega^2 j^2}$ \\[5pt]  
			3  & $\dfrac{b_{j}c_{j}(d_{j} - a_{j})}{ 4 \omega^2 j^2}$ & $\dfrac{b_{j}c_{j}^2 - c_{j}(d_{j} - a_{j})^2}{ 8 \omega^3 j^3}$  & $\dfrac{b_{j}c_{j}(d_{j} - a_{j})}{ 4 \omega^2 j^2}$ & $\dfrac{b_{j}^2c_{j} + b_{j}(d_{j} - a_{j})^2}{ 8 \omega^3 j^3}$ \\[5pt]  
		\end{tabular} \vspace{3mm}
	\end{small}	
\end{minipage}

\begin{proposition}
	There exists $\epsilon_0>0$ such that the operator $Q(\epsilon) = Q + \epsilon \mathcal{R}$ is strongly diagonalizable, for $|\epsilon|<\epsilon_0$.
\end{proposition}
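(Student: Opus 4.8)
The plan is to reduce everything to the family of $2\times 2$ matrices $Q_j(\epsilon)=\omega\mathcal{D}_j+\epsilon\mathcal{R}_j$, $j\ge 1$ (here $\omega\ne 0$, as is tacitly assumed throughout this section), and to produce, for all sufficiently large $j$ and all $|\epsilon|<\epsilon_0$, an explicit diagonalizing matrix $S_j(\epsilon)$ with $\|S_j(\epsilon)\|$ and $\|S_j(\epsilon)^{-1}\|$ bounded by a constant independent of $j$ and $\epsilon$. Since $Q(\epsilon)=\omega D_x+\epsilon\mathcal{R}$ is plainly $E$-invariant with matrix symbol of moderate growth (of order at most $1$), this verifies Definition~\ref{strong-diag} with exponents $r=s=0$.

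First I would record the characteristic data of $Q_j(\epsilon)=\left(\begin{smallmatrix}-\omega j+\epsilon a_j & \epsilon b_j\\ \epsilon c_j & \omega j+\epsilon d_j\end{smallmatrix}\right)$. Its trace is $\epsilon(a_j+d_j)$ and a short computation gives the discriminant
\[
\operatorname{tr}(Q_j(\epsilon))^2-4\det(Q_j(\epsilon))=4\omega^2 j^2\left(1-\frac{\epsilon(a_j-d_j)}{\omega j}+\frac{\epsilon^2\big[(a_j-d_j)^2+4b_jc_j\big]}{4\omega^2 j^2}\right).
\]
Because $\|\mathcal{R}_j\|=\mathcal{O}(j^{\delta})$ with $\delta<1$, the parenthesized factor equals $1+\mathcal{O}(j^{-(1-\delta)})$ uniformly for $|\epsilon|<\epsilon_0$, so there is $j_0$ — depending on $\epsilon_0$, on $\omega$, and on the moderate-growth constant of $\mathcal{R}$ — such that this factor has real part larger than $1/2$ whenever $j\ge j_0$. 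For such $j$ its principal square root is a well-defined holomorphic function of $\epsilon$, and the two eigenvalues
\[
\sigma_j^{\pm}(\epsilon)=\tfrac12\,\epsilon(a_j+d_j)\pm\omega j\sqrt{1-\tfrac{\epsilon(a_j-d_j)}{\omega j}+\tfrac{\epsilon^2[(a_j-d_j)^2+4b_jc_j]}{4\omega^2 j^2}}
\]
are holomorphic in $\epsilon$, simple (their difference has modulus $\ge|\omega|j$), and satisfy $\sigma_j^{\pm}(\epsilon)\to\pm\omega j$ as $\epsilon\to 0$; expanding, $\sigma_{j,1}^{1}=a_j$ and $\sigma_{j,1}^{2}=d_j$, in agreement with the table above.

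Next I would write the eigenvectors by reading off a suitable row of $Q_j(\epsilon)-\sigma_j^{\mp}(\epsilon)I$: an eigenvector for $\sigma_j^{-}(\epsilon)$ is proportional to $\big(\sigma_j^{-}(\epsilon)-\omega j-\epsilon d_j,\ \epsilon c_j\big)$, whose first entry is $\sim-2\omega j$ and whose second entry is $\mathcal{O}(j^{\delta})$; with the normalization $\langle v_j^1(\epsilon),v_{j,0}^1\rangle=1$ this becomes $v_j^1(\epsilon)=(1,\alpha_j(\epsilon))$ with $\alpha_j$ holomorphic and $|\alpha_j(\epsilon)|\le C\,j^{-(1-\delta)}$, and symmetrically $v_j^2(\epsilon)=(\beta_j(\epsilon),1)$ with $|\beta_j(\epsilon)|\le C\,j^{-(1-\delta)}$. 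Putting $S_j(\epsilon)=\left(\begin{smallmatrix}1 & \beta_j(\epsilon)\\ \alpha_j(\epsilon) & 1\end{smallmatrix}\right)$ one has $Q_j(\epsilon)=S_j(\epsilon)\,\operatorname{diag}\!\big(\sigma_j^{-}(\epsilon),\sigma_j^{+}(\epsilon)\big)\,S_j(\epsilon)^{-1}$, with $\|S_j(\epsilon)\|\le 1+C\,j^{-(1-\delta)}$; and since $\det S_j(\epsilon)=1-\alpha_j(\epsilon)\beta_j(\epsilon)\to 1$, after enlarging $j_0$ we get $|\det S_j(\epsilon)|\ge \tfrac12$ and hence $\|S_j(\epsilon)^{-1}\|\le 2\big(1+C\,j^{-(1-\delta)}\big)$ for $j\ge j_0$. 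Taking the supremum over $j\ge j_0$ and $|\epsilon|<\epsilon_0$ provides the constant $k$ in Definition~\ref{strong-diag}, so $Q(\epsilon)$ is strongly diagonalizable. (These analytic eigenvalues and eigenvectors coincide with the power series \eqref{exp-eigenv-2} computed above, by uniqueness of Taylor expansions and matching at $\epsilon=0$.)

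The step I expect to be delicate is precisely the uniformity in $j$: one must ensure that the unperturbed spectral gap $2|\omega|j$ dominates $\|\epsilon\mathcal{R}_j\|=\mathcal{O}(|\epsilon|j^{\delta})$ for \emph{all} $j\ge j_0$ at once — this is where the hypothesis $\delta<1$ enters — so that the diagonalizing matrices $S_j(\epsilon)$ tend to the identity at a rate independent of $\epsilon$ on the disk $|\epsilon|<\epsilon_0$. For the finitely many indices $j<j_0$ nothing has to be shown, since Definition~\ref{strong-diag} only asks for the diagonalization to hold from some index on.
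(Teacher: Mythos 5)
Your proof is correct and reaches the same final bounds as the paper's, but by a genuinely different route. The paper's argument builds directly on the Kato--Rellich power-series machinery developed earlier in the section: it writes the columns of $S_j(\epsilon)$ as $(1,\sum_k\alpha_{j,k}\epsilon^k)$ and $(\sum_k\beta_{j,k}\epsilon^k,1)$, asserts the coefficient bounds $|\alpha_{j,k}|,|\beta_{j,k}|=\mathcal{O}(j^{k(\delta-1)})$, sums the series to get $|\sum_k\alpha_{j,k}\epsilon^k|\leqslant C\,|\epsilon|/(1-|\epsilon|)$ uniformly in $j$, and then estimates $\|S_j(\epsilon)^{-1}\|=|\det S_j(\epsilon)|^{-1}\|S_j(\epsilon)\|$ exactly as you do. You instead bypass the series entirely and diagonalize each $2\times 2$ block in closed form: you compute the discriminant, observe it is $4\omega^2 j^2\big(1+\mathcal{O}(j^{-(1-\delta)})\big)$, and read off eigenvectors from a row of $Q_j(\epsilon)-\sigma I$, getting the bound $|\alpha_j(\epsilon)|,|\beta_j(\epsilon)|\leqslant C j^{-(1-\delta)}$ in one step. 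Your version is more elementary and self-contained (it does not rest on the coefficient estimates $|\alpha_{j,k}|=\mathcal{O}(j^{k(\delta-1)})$, which the paper states but does not prove in the proposition itself), and it makes the uniformity in $j$ — correctly identified by you as the crux — transparent through the explicit spectral-gap comparison $2|\omega|j$ versus $\mathcal{O}(|\epsilon|j^\delta)$. The paper's version, on the other hand, fits more naturally into the section's Kato-style framework, since the same coefficients $\alpha_{j,k},\beta_{j,k}$ are reused immediately afterward in the examples to estimate the eigenvalue expansions and apply Theorem~\ref{prop-perturbation2}. Both are sound; yours would be slightly preferable as a standalone proof, the paper's as part of the section's overall development.
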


\begin{proof}
	The eigenvalues and eigenvectors of $Q_{j}(\epsilon) = Q_j + \epsilon \mathcal{R}_j$ can be written in the form \eqref{exp-eigenv-2}, for $j \in\N$. Moreover, for each $k\in\N$, we have
	\begin{equation}\label{order-ctrl}
	|\alpha_{j, k} | =   \mathcal{O}( j^{ k(\delta - 1)}) \ \textrm{ and } \ |\beta_{j, k} |  =   \mathcal{O}( j^{k(\delta - 1)}),
	\end{equation}
	as $j \to \infty$,  and that the matrices
	\begin{equation*}
	S_{j}(\epsilon) = [v_{j}^1(\epsilon) \ \ v_{j}^2(\epsilon)] =
	\left [
	\begin{array}{cc}
	1                &   \displaystyle\sum_{k=1}^{\infty} \beta_{j, k}\epsilon^k   \\
	\displaystyle \sum_{k=1}^{\infty} \alpha_{j, k}\epsilon^k   &  1
	\end{array}
	\right]
	\end{equation*}
	satisfy $Q_{j}(\epsilon) = S_{j}(\epsilon) \, \mbox{diag} ( \sigma^1{\ell}(\epsilon) \, , \, \sigma^2{\ell}(\epsilon)  ) \, S_{j}(\epsilon)^{-1}$.
	
	Since $\delta < 1$, it follows from \eqref{order-ctrl} that there is $\epsilon_0 >0$, and constants $C_1, C_2$ and $j_0$ such that, for $j \geqslant j_0$ and $|\epsilon| \leqslant \epsilon_0$, we have
	\begin{equation}\label{estim-eigenv}
	\left |\sum_{k=1}^{\infty} \alpha_{j, k}\epsilon^k  \right|  \leqslant  C_1 \dfrac{|\epsilon|}{1 -|\epsilon|}
	\ \textrm{ and } \
	\left |\sum_{k=1}^{\infty} \beta_{j, k}\epsilon^k  \right|   \leqslant  C_2 \dfrac{|\epsilon|}{1 -|\epsilon|}.
	\end{equation}
	
	This shows that $\sup_{j \in \N} \|S_{j}(\epsilon)\| < \infty$, for any $\epsilon \leqslant \epsilon_0$.
	
	On the other hand, for $S_{j}(\epsilon)^{-1}$ we have
	\begin{equation*}
	\|S_{j}(\epsilon)^{-1}\| = |\det (S_{j}(\epsilon))|^{-1} \|S_{j}(\epsilon)\| =  \left|1 -  \sum_{k=1}^{\infty} \alpha_{j, k}\epsilon^k \sum_{k=1}^{\infty} \beta_{j, k}\epsilon^k \right|^{-1} \|S_{j}(\epsilon)\|.
	\end{equation*}
	
	Now, from \eqref{estim-eigenv}, for $j\geqslant j_0$ and $|\epsilon| < \min \{\epsilon_0, (1+\sqrt{2C_1C_2})^{-1}\}$,  we have
	
	\begin{equation*}
	\left|1 -  \left(\sum_{k=1}^{\infty} \alpha_{j, k}\epsilon^k\right) \left(\sum_{k=1}^{\infty} \beta_{j, k}\epsilon^k\right) \right|
	= 1 - C_1C_2 \dfrac{|\epsilon|^2}{(1 -|\epsilon|)^2} \geqslant \frac{1}{2}.
	\end{equation*}
	
	So there is $\epsilon'>0$, $C>0$ and $j_0 \in \N$ such that,
	$$\|S_{j}(\epsilon)^{-1}\| \leqslant C, \mbox{ for } j\geqslant j_0 \mbox{ and } |\epsilon|<\epsilon'.$$
	
	Therefore $Q(\epsilon)$ is strongly diagonalizable.
	
\end{proof}

\begin{example}
	Perturbations by operators with matrix symbol given by
	
	\begin{equation*}
	{\mathcal{R}}_0 = 0, \ \textrm{ and } \
	{\mathcal{R}}_j = \left [
	\begin{array}{cc}
	0   & \gamma_j \\
	\gamma_j  & 0
	\end{array}
	\right], \ j \in \N,
	\end{equation*}
	
	\noindent where $|\gamma_j| = \mathcal{O}(j^{\delta})$, for some $0 \leqslant \delta < 1$, were studied in Example \ref{n-comm-example1}

	By using the explicit formulas for the coefficients above, we obtain the first terms of the expansion of eigenvalues
	\begin{equation*}\label{sigma_j_eps_1}
	\sigma_{j}^1(\epsilon) = - \omega j - \dfrac{\gamma_j^2}{2 \omega j} \, \epsilon^2 + \dfrac{\gamma_j^4}{(2 \omega j)^3} \, \epsilon^4
	+ \mathcal{O}(\epsilon^6),
	\end{equation*}
	and
	\begin{equation*}\label{sigma_j_eps_2}
	\sigma_{j}^2(\epsilon) = \omega j + \dfrac{\gamma_j^2}{2 \omega j} \, \epsilon^2 - \dfrac{\gamma_j^4}{(2 \omega j)^3} \, \epsilon^4
	+ \mathcal{O}(\epsilon^6).
	\end{equation*}
	
	In particular, all the odd powers of $\epsilon$ in these expansions are null.
	
	Observe that the eigenvalues of the matrices $Q_{j}(\epsilon) = \omega \mathcal{D}_j + \epsilon \mathcal{R}_j$ are 
	$$
	\sigma_{j}(\epsilon) = \sqrt[2]{\omega^2 j^2 +  \epsilon^2 \gamma_j^2}, \ j \in \N_0,
	$$	
	therefore the expansion of $\sigma_{\ell}(\epsilon)$ in powers of $\epsilon$ is given by
	\begin{equation}\label{expansion-sqrt}
	\sigma_j^m(\epsilon) = (-1)^m\omega j + \sum_{k=1}^{+\infty}(-1)^{m(2k-1)} \dfrac{\gamma_{j}^{2k}}{(\omega j)^{2k-1}} a_k\epsilon^{2k},
	\end{equation}
	where $m=1,2$ and
	\begin{align*}
	a_k \doteq \binom{1/2}{k} 
	& = \dfrac{(-1)^{k-1}(2k - 2)!}{2^{2k-1}k! (k-1)!}
	\end{align*}	
	satisfies $|a_k| \leqslant 1/2$, for $k \in \N$.
	
	\vspace{4mm}
	\noindent {\bf Claim:} There exist  $\eta,\epsilon_0>0$ and $j_0, N \in\N$ such that
	\begin{equation}\label{high-terms}
	\sum_{k\geqslant N+1}\left| \dfrac{\gamma_{j}^{2k}}{(\omega j)^{2k-1}} a_k\epsilon^{2k}\right| 
	\leqslant j^{-\eta},
	\end{equation}
	for $j\geqslant j_0$ and $|\epsilon|<\epsilon_0.$
	
	\begin{proof}
		Since $|\gamma_j| = \mathcal{O}(j^{\delta})$, with $0 \leqslant \delta < 1$, there are $j_0\in \N$ and $C>0$ such that
		$$
		|\gamma_j| \leqslant C j^\delta, \mbox{ for } j\geqslant j_0.
		$$ 
		
		Now we can choose $N\in\N$ such that $2N(\delta - 1) + 1 < 0,$ and set $$\eta = -[2N(\delta - 1) + 1].$$
		
		Thus, if $k\geqslant N$ and $j\geqslant j_0$ then
		\begin{equation*}
		\left| \dfrac{\gamma_{j}^{2k}}{(\omega j)^{2k-1}} a_k\right| 
		\leqslant \dfrac{C^{2k}j^{2k\delta}}{2|\omega|^{2k-1} j^{2k-1}} \leqslant \dfrac{|\omega|}{2} \left(\dfrac{C}{|\omega|}\right)^{2k} j^{-\eta}.
		\end{equation*}

		Therefore, setting $\epsilon_0=|\omega|/2C$, for any $|\epsilon| <\epsilon_0$ we have
		\begin{align*}
		\sum_{k\geqslant N+1}\left| \dfrac{\gamma_{j}^{2k}}{(\omega j)^{2k-1}} a_k\epsilon^{2k}\right| 
		& \leqslant \frac{|\omega|}{2}j^{-\eta} \sum_{k\geqslant N+1}\left(\dfrac{C}{|\omega|} |\epsilon|\right)^{2k} \\
		& \leqslant \frac{|\omega|}{2}j^{-\eta} \sum_{k\geqslant N+1}\left(\dfrac{1}{2}\right)^{2k} = \\
		& \frac{|\omega|}{3\cdot 2^{2N+1}}j^{-\eta}.
		\end{align*}
		
		Increasing $N$, if necessary, we obtain \eqref{high-terms}
	\end{proof}
	
	Going back to our example  and keeping in mind the last claim, we have
	\begin{align*}
	\left|\tau + \sigma_j^m(\epsilon) \right|  & = \left|\tau + (-1)^m\omega j + \sum_{k=1}^{+\infty}\dfrac{(-1)^{m(2k-1)} \gamma_{j}^{2k}}{(\omega j)^{2k-1}} a_k\epsilon^{2k} \right| \\ 
	& \geqslant \left|\tau + (-1)^m\omega j + \sum_{k=1}^{N} \dfrac{(-1)^{m(2k-1)}\gamma_{j}^{2k}} {(\omega j)^{2k-1}} a_k\epsilon^{2k} \right| - j^{-\eta},
	\end{align*}
	for $j\geqslant j_0$ and $|\epsilon|<\epsilon_0.$
	
	Therefore, if there are constants $C_1,\theta_1,\epsilon_1>0$ and $j_1,N\in\N$ such that $\theta_1 < \eta$ and
	$$
	\left|\tau + (-1)^m\omega j + \sum_{k=1}^{N} \dfrac{(-1)^{m(2k-1)}\gamma_{j}^{2k}} {(\omega j)^{2k-1}} a_k\epsilon^{2k} \right| \geqslant C_1 j^{-\theta_1},
	$$
	for $j\geqslant j_1$ and $|\epsilon|<\epsilon_1,$ then $L(\epsilon) = D_t + \omega D_x + \epsilon \mathcal{R}$ is (GH).
\end{example}

\medskip
\begin{example}
	Taking $\delta < 1/2, \  N = 1$ and $\eta = 1-2\delta$ in the last example, after a simple adjustment in the proof, we  have the following: if 
	$L=D_t + \omega D_x$ is (GH) with
	\begin{equation}
	\inf_{\tau \in \Z}\left|\tau + \omega \ell \right| \geqslant C_2 \ell^{-\theta},
	\end{equation} 
	for some $0 < \theta < 1-2\delta$, then $L(\epsilon) = D_t + \omega D_x + \epsilon \mathcal{R}$ is (GH).
\end{example}

\medskip
\begin{example}
	Consider the $E-$invariant operator $\mathcal{R}$ with matrix symbol given by
	\begin{equation*}
	{\mathcal{R}}_0 = 0 \in \C,
	\textrm{ and } \
	{\mathcal{R}}_j = \left [
	\begin{array}{cc}
	a_j  & 0 \\[2mm]
	c_j  & d_j
	\end{array} \right] \in \C^{2\times 2},  \ j \in \N.
	\end{equation*}
	
	In this case we have $\sigma_{j, k}^1 = \sigma_{j, k}^2 = 0$, for all $k \geqslant 2$ and therefore
	\begin{equation*}
	\sigma^1_{j}(\epsilon) = - \omega j + a_j \epsilon \ \textrm{ and } \ \sigma^2_{j}(\epsilon) =  \omega j + d_j \epsilon,
	\end{equation*}
	for all $j\in\N.$
	
	Thus $L_{\epsilon}$ is globally hypoelliptic if, and only if,
	\begin{equation}\label{jordan-block-cond}
	\inf_{\tau \in \Z} \left|\tau - \omega \ell + a_\ell \epsilon \right| \geqslant C \ell^{-\theta} \ \textrm{ and } \
	\inf_{\tau \in \Z} \left|\tau + \omega \ell + d_\ell \epsilon \right| \geqslant C \ell^{-\theta},
	\end{equation}
	are verified for some $C,\theta>0$ and all $\ell \geqslant\ell_0.$
	
	Notice that the entries $c_j$ of the matrices ${\mathcal{R}}_j$ play no role in the study of global hypoellipticity. 
	
	In particular, if the matrices $\mathcal{R}_j$ are nilpotent then $a_j=d_j=0$, for all $j\in\N$, and $L(\epsilon) = D_t + \omega D_x + \epsilon \mathcal{R}$ is (GH) if and only if $L= D_t + \omega D_x$ is (GH).
\end{example}

\end{document}